\numberwithin{equation}{section}
\newtheorem{prop}{Proposition}[section]
\newtheorem{theo}[prop]{Theorem}
\newtheorem{lemm}[prop]{Lemma}
\def\and{\quad{\rm and}\quad}
\def\<{\langle}
\def\>{\rangle}
\begin{document}
\title[Interior $C^2$ estimate for Hessian quotient equation in dimension three]{Interior $C^2$ estimate for Hessian quotient equation in dimension three}
\author[Siyuan Lu]{Siyuan Lu}
\address{Department of Mathematics and Statistics, McMaster University, 
1280 Main Street West, Hamilton, ON, L8S 4K1, Canada.}
\email{siyuan.lu@mcmaster.ca}
\thanks{Research of the author was supported in part by NSERC Discovery Grant.}

\begin{abstract}
In this paper, we establish an interior $C^2$ estimate for the Hessian quotient equation $\left(\frac{\sigma_3}{\sigma_1}\right)(D^2u)=f$ in dimension three. A crucial ingredient in our proof is a Jacobi inequality.
\end{abstract}

\maketitle 

\section{Introduction}

In the study of fully nonlinear equations, apriori estimates play an essential role. Among them, interior estimates are of particular interest due to their elegant form. By interior estimates, we refer to the pure interior estimates which are independent of any information of the solution on the boundary.

\medskip

The study of interior $C^2$ estimate for fully nonlinear equations dates back to Heinz's work on Weyl's embedding problem. In \cite{H}, Heinz established an interior $C^2$ estimate for the Monge-Amp\`ere equation $\det(D^2u)=f$ for $n=2$, see also recent proofs by Chen, Han and Ou \cite{CHO} and Liu \cite{Liu}. However, interior $C^2$ estimate fails for the Monge-Amp\`ere equation for $n\geq 3$, due to the counter example by Pogorelov \cite{Pb}. Pogorelov's example was extended by Urbas \cite{Urbas} to show that interior $C^2$ estimate fails for the Hessian equation $\sigma_k(D^2u)=f$ for $k\geq 3$. Here $\sigma_k$ is the $k$-th elementary symmetric function. The remaining case for the Hessian equation is $\sigma_2(D^2u)=f$ and it is a longstanding problem.

For $\sigma_2$ equation, a major breakthrough was made by Warren and Yuan. In \cite{WY09}, they obtained an interior $C^2$ estimate for $\sigma_2(D^2u)=1$ for $n=3$. For general right hand side $f$ in dimension $3$, interior $C^2$ estimate was proved recently by Qiu \cite{Q1,Q2}. For $n=4$, interior $C^2$ estimate for $\sigma_2(D^2u)=1$ was established in a cutting edge paper by Shankar and Yuan \cite{SY-23}. For general $n$, interior $C^2$ estimate for $\sigma_2(D^2u)=1$ was proved by McGonagle, Song and Yuan \cite{MSY} under the condition $D^2u> -c(n)$, by Shankar and Yuan \cite{SY-20} under the condition $D^2u>-K$, by Shankar and Yuan \cite{SY-23} under the condition $D^2u\geq -c(n)\Delta u$ and by Mooney \cite{Mooney} under the condition $D^2u>0$. For general $n$ and general right hand side $f$, interior $C^2$ estimate was established by Guan and Qiu \cite{GQ} under the condition $\sigma_3(D^2u)>-A$.

\medskip

Apart from Hessian equation, another known fully nonlinear equation that admits interior $C^2$ estimate is the special Lagrangian equation $\sum_i\arctan(\lambda_i)=\Theta$ \cite{HL}, where $\lambda_i$'s are the eigenvalues of $D^2u$ and $\Theta$ is called the phase. 

The study of interior $C^2$ estimate for special Lagrangian equation was initiated in a pioneer work by Warren and Yuan \cite{WY09} mentioned above. In \cite{WY09}, they obtained an interior $C^2$ estimate for $\sum_i\arctan(\lambda_i)=\frac{\pi}{2}$ for $n=3$ (This is equivalent to $\sigma_2(D^2u)=1$ for $n=3$). Interior $C^2$ estimate for convex solutions of special Lagrangian equation was proved by Chen, Warren and Yuan \cite{CWY}. Interior $C^2$ estimate for special Lagrangian equation with critical and supercritical phase $|\Theta|\geq \frac{(n-2)\pi}{n}$ was established via works of Wang, Warren and Yuan \cite{WY09,WY10,WdY14}. The notion of critical phase was introduced by Yuan \cite{Y06} and it illustrates the dramatic difference between subcritical phase and supercritical phase. For variable phase functions $\theta(x)$, interior $C^2$ estimate was obtained by Bhattacharya and Shankar \cite{BS1,BS2} for convex solutions, by Bhattacharya \cite{B1} for supercritical phase $|\theta(x)|>\frac{(n-2)\pi}{n}$ and by the author \cite{Lu} for critical and supercritical phase $|\theta(x)|\geq\frac{(n-2)\pi}{n}$. Note that interior $C^2$ estimate fails for special Lagrangian equation with subcritical phase by examples of Nadirashvili and Vlăduţ \cite{NV} and Wang and Yuan \cite{WdY13}. This illustrates the subtlety of the interior $C^2$ estimate.

\medskip

Besides Hessian equation and special Lagrangian equation, another important class of fully nonlinear equation is the Hessian quotient equation. In contrast, very little was known concerning the interior $C^2$ estimate for Hessian quotient equation. To our best knowledge, the only known case is $\left(\frac{\sigma_3}{\sigma_1}\right)(D^2u)=1$ for $n=3$ and $4$ proved by Chen, Warren and Yuan \cite{CWY} and Wang and Yuan \cite{WdY14} mentioned above. In this case, the equation is in fact the special Lagrangian equation with phase $\pi$, and the special Lagrangian structure plays an essential role in their proofs. For general Hessian quotient equation, there is no special Lagrangian structure. It is then of great interest to know whether interior $C^2$ estimate holds for general Hessian quotient equation with general right hand side. 

\medskip

The main result of this paper is an interior $C^2$ estimate for the Hessian quotient equation $\left(\frac{\sigma_3}{\sigma_1}\right)(D^2u)=f$ in dimension three.
\begin{theo}\label{Theorem}
Let $f\in C^{1,1}(B_{10})$ be a positive function and let $u\in C^4(B_{10})$ be a convex solution of 
\begin{align}\label{eq-orginal}
F(D^2u)=\left(\frac{\sigma_3}{\sigma_1}\right)(D^2u)=f(x),\quad \textit{in}\quad B_{10}\subset \mathbb{R}^3.
\end{align}
Then we have
\begin{align*}
|D^2u(0)|\leq C,
\end{align*}
where $C$ depends only on $\|u\|_{C^{0,1}(\overline{B}_9)}$, $\min_{\overline{B}_9}f $ and $\|f\|_{C^{1,1}(\overline{B}_9)}$. 
\end{theo}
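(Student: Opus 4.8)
The plan is to follow the classical test-function / maximum-principle strategy, with the novelty concentrated in a Jacobi-type differential inequality adapted to the operator $F=\sigma_3/\sigma_1$. Set $w = \log \lambda_{\max}(D^2u)$, or more robustly the regularized quantity $w = \log P_m$ with $P_m = \sum_i \lambda_i^m$ for large $m$, to avoid the non-smoothness of the largest eigenvalue. Consider the test function $\varphi = w + \text{(lower order terms)}$ with an exponential cutoff, e.g.\ $\varphi = \eta^{2} \lambda_{\max} e^{g(|Du|^2)}$ or the additive form $\log \lambda_{\max} + A|Du|^2 + B\log \eta$, where $\eta$ is a standard interior cutoff supported in $B_9$. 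The goal is to bound $\varphi$ at its interior maximum $x_0$.

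First I would record the structure of $F$: write the linearized operator $F^{ij} = \partial F/\partial u_{ij}$, which at a diagonalizing frame is $F^{ii} = (\sigma_2(\lambda|i)\sigma_1 - \sigma_3)/\sigma_1^2$, note concavity of $F^{1/2}$ (or the relevant power) on the positive cone so that $F^{ij}$ is positive definite there and the standard second-derivative terms have a favorable sign, and extract the ellipticity constants in terms of $f$, $\min f$ and the (not yet controlled) size of $\lambda_{\max}$. Then, at the maximum point $x_0$ of $\varphi$, I would differentiate $\varphi$ once (first-order critical equation) and twice (second-order inequality $F^{ij}\varphi_{ij} \le 0$), and differentiate the equation $F(D^2u)=f$ once and twice to substitute for the third derivatives of $u$ that appear. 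The differentiated equation gives $F^{ij} u_{ijk} = f_k$ and $F^{ij}u_{ijkk} = f_{kk} - F^{ij,rs}u_{ijk}u_{rsk}$; the concavity term $-F^{ij,rs}u_{ijk}u_{rsk} \ge 0$ is the main positive quantity one hopes to exploit.

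The crucial step — and the one I expect to be the main obstacle — is establishing the Jacobi inequality: an estimate of the form $F^{ij}(\log \lambda_{\max})_{ij} \ge c\, F^{ij}(\log\lambda_{\max})_i(\log\lambda_{\max})_j - C(1 + \text{trace }F^{ij})$ (schematically) near the maximum point, with a \emph{positive} constant $c$ in front of the gradient-squared term. This is where the specific algebra of $\sigma_3/\sigma_1$ in dimension three must be used: one has only three eigenvalues, the constraint $\sigma_3 = f\sigma_1 > 0$ forces all $\lambda_i>0$, and one must control the ``bad'' third-order terms $\sum_{p\ne 1} (F^{pp}-F^{11})u_{11p}^2/(\lambda_1-\lambda_p)$ coming from differentiating an eigenvalue, together with the terms $F^{pp}u_{1p1}^2/\lambda_1$, using the first-order critical equation for $\varphi$ to trade $u_{11p}$ for $u_{1pp}$-type quantities and the gradient term $A|Du|^2$. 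In the special Lagrangian case $F = \sum \arctan\lambda_i$ with phase $\pi$, such an inequality is classical (Chen–Warren–Yuan, Wang–Yuan); the content here is to push it through for a genuine quotient with variable $f$, where no Lagrangian/divergence structure is available. I would isolate this as a separate lemma (the Jacobi inequality), prove it by a careful case analysis on the relative sizes of $\lambda_1 \ge \lambda_2 \ge \lambda_3$ (e.g.\ $\lambda_2$ comparable to $\lambda_1$ versus $\lambda_2$ small), and then feed it into the maximum-principle computation: the gradient term from the Jacobi inequality absorbs the first-order terms generated by the cutoff and the $A|Du|^2$ term, while the remaining $C(1+\sum F^{ii})$ is controlled because $\sum F^{ii}$ is bounded in terms of $f$, $\min f$ and, crucially, does not grow faster than linearly in $\lambda_{\max}$ — yielding at $x_0$ a bound $\lambda_{\max}(x_0)\eta^2(x_0) \le C$, hence $|D^2u(0)|\le C$ after unwinding the cutoff. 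The final constants depend only on $\|u\|_{C^{0,1}(\overline B_9)}$ (entering through $|Du|$ and the cutoff), $\min_{\overline B_9} f$ and $\|f\|_{C^{1,1}(\overline B_9)}$, as claimed.
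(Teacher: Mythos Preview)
Your identification of the Jacobi inequality as the heart of the argument is correct, and your proposed route to it---case analysis on the relative sizes of $\lambda_1\ge\lambda_2\ge\lambda_3$, using the algebra specific to $\sigma_3/\sigma_1$ in dimension three---is exactly what the paper does. The paper proves
\[
\sum_i F^{ii}(\ln\lambda_1)_{ii}\ \ge\ \tfrac{1}{432}\sum_i F^{ii}(\ln\lambda_1)_i^2 - C
\]
with a \emph{pure} constant $C$ (not $C\sum_i F^{ii}$), by splitting into the cases $\lambda_2\le\lambda_1/4$ and $\lambda_2\ge\lambda_1/4$, plus a separate treatment when $\lambda_1$ has multiplicity two.

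The gap is in the second half of your plan. The pointwise maximum-principle argument with a test function of the form $\ln\lambda_1+A|Du|^2+B\log\eta$ (or the multiplicative variant) does not close for this operator. The reason is a matching of orders: the cutoff produces bad terms of size $\sum_i F^{ii}/\eta$, and here $\sum_i F^{ii}=\sigma_2/\sigma_1-3f/\sigma_1\sim\lambda_2+\lambda_3$, which can be of order $\lambda_1$ when $\lambda_2\sim\lambda_1$. The only positive term of comparable strength you can generate is $F^{ii}(|Du|^2)_{ii}\sim 2\sum_i F^{ii}\lambda_i^2=4f\sigma_2/\sigma_1$, which is of the \emph{same} order $\sigma_2/\sigma_1$. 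After the Jacobi gradient term absorbs the first-order cutoff terms, the two linear-in-$\lambda_1$ contributions balance each other and you are left with an inequality of the form $-C\le C'\sigma_2/(\sigma_1\eta)$, which is vacuous and gives no control on $\lambda_1$. In the degenerate regime $\lambda_2,\lambda_3$ bounded, both sides are $O(1)$ and again nothing is gained. Unlike the $\sigma_2$ equation (where $\sum F^{ii}\lambda_i^2=\sigma_1\sigma_2-3\sigma_3\sim f\lambda_1$ genuinely dominates under a $\sigma_3$ bound), the quotient structure here gives no term that strictly beats the cutoff.

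The paper circumvents this by abandoning the pointwise route after the Jacobi inequality. Instead it performs a Legendre transform of $u+\tfrac12|x|^2$; the transformed linearization $G^{ii}=F^{ii}(1+\lambda_i)^2$ has all three coefficients comparable to $\lambda_2$, hence is uniformly elliptic after normalization. The Caffarelli--Cabr\'e local maximum principle then yields a mean value inequality $b(0)\le C\int_{B_1} b\,\sigma_2\,dx+C$. The final step is to bound this integral by integration by parts; since $F^{ij}$ is not divergence free, the paper works with $H^{ij}=\sigma_1 F^{ij}=\sigma_3^{ij}-f\delta_{ij}$, for which $\sum_j\partial_j H^{ij}=-f_i$ is controlled, and uses the Jacobi inequality a second time (in distributional form) to estimate $\int H^{ij}b_ib_j$. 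This integral machinery, not a pointwise test function, is what converts the Jacobi inequality into the Hessian bound.
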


We remark that our equation does not have special Lagrangian structure and the argument in \cite{CWY} does not work in our case.

As we can see from the discussions above, fully nonlinear equations that admit interior $C^2$ estimate are very rare, namely $\sigma_2$ equation and special Lagrangian equation with certain constraints. Our theorem provides a new example which does not belong to the above two classes of equations, that also admits interior $C^2$ estimate.

\medskip

The crucial ingredient of our proof is a Jacobi inequality for $\ln \lambda_1$, where $\lambda_1$ is the largest eigenvalue of $D^2u$. The main difficulty is to handle the third order terms. This turns out to be extremely delicate and subtle. Our key observation is that we can extract enough positive terms from $-\sum_{p,q,r,s} F^{pq,rs}u_{pq1}u_{rs1}$ to offset the negative term $-F^{11}u_{111}^2$. This is the place where we make full use of our equation. It is not clear to us whether such Jacobi inequality holds for general Hessian quotient equations.

After obtaining the Jacobi inequality, we adopt the idea by Shankar and Yuan \cite{SY-20} to perform a Legendre transform. The new equation is uniformly elliptic and we can bound $\ln\lambda_1$ by its integral. The last step is to use integration by parts to estimate this integral. We encounter new difficulty here due to the fact that $F^{ij}$ is not divergence free. To overcome this difficulty, we employ the new coefficient $H^{ij}=\sigma_1F^{ij}$. Although $H^{ij}$ not divergence free, we are able to control all terms generating from integration by parts. 

\medskip

The organization of the paper is as follows. In Section 2, we will collect some basic properties of the operator $F$ and a simple observation of our equation. In Section 3, we will establish the crucial Jacobi inequality. In Section 4, we will perform a Legendre transform to bound $\ln\lambda_1$ by its integral. In Section 5, we will complete the proof of Theorem \ref{Theorem} using integration by parts.

After we completed our paper, we were informed by Yuan that Zhou has independently proved the main theorem above in \cite{Zhou} using a completely different method.

\section{Preliminaries}

We first collect some basic properties of the operator $F$.
\begin{lemm}\label{F^{ijkl}}
Suppose $D^2u$ is diagonalized at $x_0$. Then at $x_0$, we have
\begin{align*}
F^{ij}=\frac{\partial F}{\partial u_{ij}}=\frac{\sigma_3^{ij}}{\sigma_1}-\frac{\sigma_3}{\sigma_1^2}\delta_{ij},
\end{align*}
\begin{align*}
F^{ij,kl}=\frac{\partial^2F}{\partial u_{ij}\partial u_{kl}}=\begin{cases}
-2\frac{\sigma_3^{ii}}{\sigma_1^2}+2\frac{\sigma_3}{\sigma_1^3},\quad &i=j=k=l,\\
\frac{\sigma_3^{ii,kk}}{\sigma_1}-\frac{\sigma_3^{ii}}{\sigma_1^2}-\frac{\sigma_3^{kk}}{\sigma_1^2}+2\frac{\sigma_3}{\sigma_1^3}, &i=j,k=l,i\neq k,\\
\frac{\sigma_3^{ij,ji}}{\sigma_1}, & i=l,j=k,i\neq j,\\
0, & \textit{otherwise}.
\end{cases}
\end{align*}

In particular, 
\begin{align*}
F^{ij,ji}=\frac{F^{ii}-F^{jj}}{\lambda_i-\lambda_j},\quad i\ne j,\quad\lambda_i\neq \lambda_j,
\end{align*}
where $\lambda_i$'s are the eigenvalues of $D^2u$. 

\end{lemm}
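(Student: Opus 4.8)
The plan is to prove Lemma \ref{F^{ijkl}} by a direct computation exploiting the fact that $F(D^2u) = \sigma_3(D^2u)/\sigma_1(D^2u)$ is a quotient of two functions each of which is, up to a linear change, a polynomial in the matrix entries. First I would write $F = \sigma_3/\sigma_1$ as a function of the Hessian $(u_{ij})$ and differentiate via the quotient rule:
\begin{align*}
F^{ij} = \frac{\partial F}{\partial u_{ij}} = \frac{1}{\sigma_1}\frac{\partial \sigma_3}{\partial u_{ij}} - \frac{\sigma_3}{\sigma_1^2}\frac{\partial \sigma_1}{\partial u_{ij}}.
\end{align*}
Since $\partial \sigma_1/\partial u_{ij} = \delta_{ij}$ always, and at a point $x_0$ where $D^2u$ is diagonal one has the standard identity $\partial\sigma_3/\partial u_{ij} = \sigma_3^{ij}$ (where $\sigma_3^{ij}$ denotes the derivative of $\sigma_3$ with respect to the $(i,j)$ entry, which for diagonal argument is diagonal), the stated formula for $F^{ij}$ follows immediately. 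The only mild care needed is to keep track that off-diagonal first derivatives of $\sigma_1$ vanish identically, so no off-diagonal $F^{ij}$ appears.

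For the second derivatives I would differentiate the quotient-rule expression once more. Writing $F = \sigma_3 \sigma_1^{-1}$ and applying the product/chain rule twice gives
\begin{align*}
F^{ij,kl} = \frac{\sigma_3^{ij,kl}}{\sigma_1} - \frac{\sigma_3^{ij}}{\sigma_1^2}\delta_{kl} - \frac{\sigma_3^{kl}}{\sigma_1^2}\delta_{ij} + 2\frac{\sigma_3}{\sigma_1^3}\delta_{ij}\delta_{kl},
\end{align*}
using again $\partial\sigma_1/\partial u_{ij} = \delta_{ij}$ and $\partial^2\sigma_1/\partial u_{ij}\partial u_{kl} = 0$. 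Now I would specialize to $x_0$ where $D^2u$ is diagonal and split into the four cases according to the index pattern. When $i=j=k=l$, $\delta_{ij}\delta_{kl}=\delta_{ij}=\delta_{kl}=1$ and $\sigma_3^{ii,ii}=0$ (since $\sigma_3$ is multilinear of degree one in each diagonal variable), yielding $-2\sigma_3^{ii}/\sigma_1^2 + 2\sigma_3/\sigma_1^3$. When $i=j$, $k=l$, $i\neq k$, all the Kronecker deltas match diagonal-to-diagonal and we get the four-term expression with $\sigma_3^{ii,kk}/\sigma_1$. When $i=l$, $j=k$, $i\neq j$, the three delta-terms all vanish because $\delta_{ij}=\delta_{kl}=0$, leaving only $\sigma_3^{ij,ji}/\sigma_1$; here I would recall that $\sigma_3^{ij,kl}$ is nonzero among the purely off-diagonal second derivatives only for the pattern $(i,j,j,i)$, which is the content of the "otherwise $=0$" case. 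The remaining index patterns give zero for the same reason.

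For the final identity $F^{ij,ji} = (F^{ii}-F^{jj})/(\lambda_i-\lambda_j)$ for $i\neq j$, $\lambda_i\neq\lambda_j$, I would use the fact that $F$ is an orthogonally invariant function of the symmetric matrix $D^2u$, so it can be written as a symmetric function $f(\lambda_1,\dots,\lambda_n)$ of the eigenvalues. The general second-derivative formula for such functions (as in, e.g., the work of Ball or Andrews) states exactly that the mixed off-diagonal second derivative with respect to the entries, evaluated at a diagonal matrix with distinct eigenvalues $\lambda_i\neq\lambda_j$, equals $(f_i - f_j)/(\lambda_i - \lambda_j)$, where $f_i = \partial f/\partial\lambda_i = F^{ii}$. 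Alternatively, one can derive this directly from the third case above by substituting the explicit expression $F^{ij,ji} = \sigma_3^{ij,ji}/\sigma_1$ and the known values $\sigma_3^{ij,ji} = -\lambda_k$ (in dimension three, where $k$ is the third index) together with $F^{ii} = \sigma_3^{ii}/\sigma_1 - \sigma_3/\sigma_1^2 = \lambda_j\lambda_k/\sigma_1 - \sigma_3/\sigma_1^2$, and checking the algebraic identity $\lambda_j\lambda_k - \lambda_i\lambda_k = (\lambda_j-\lambda_i)\lambda_k$ after dividing by $\sigma_1$. I do not anticipate a genuine obstacle here; the only point requiring attention is the bookkeeping of which second derivatives of $\sigma_3$ survive when the argument is diagonal, and the harmless cancellations among the $\delta$-terms in each case — this is routine but must be done carefully to match the four cases exactly as stated.
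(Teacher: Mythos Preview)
Your proposal is correct. The paper itself states this lemma as a preliminary fact without proof, so there is nothing to compare against; your direct computation via the quotient rule together with the standard case analysis of $\sigma_3^{ij,kl}$ at a diagonal point is exactly the routine verification one would supply, and your two alternatives for the final identity (either the general Ball--Andrews formula for spectral functions or the explicit check $\sigma_3^{ij,ji}=-\lambda_k$ in dimension three) both work.
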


We also need the following simple observation of our equation.
\begin{lemm}\label{lambda_3}
Let $u$ be a convex solution of equation (\ref{eq-orginal}). Suppose $D^2u$ is diagonalized at $x_0$ such that $\lambda_1\geq \lambda_2\geq \lambda_3$. Then at $x_0$, we have
\begin{align*}
\lambda_3\leq \sqrt{3f},\quad \lambda_2\geq \sqrt{\frac{f}{3}}.
\end{align*}
\end{lemm}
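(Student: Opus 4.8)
The plan is to exploit the arithmetic relationship between $\sigma_3$, $\sigma_1$ and the eigenvalues, combined with the convexity hypothesis $\lambda_1\geq\lambda_2\geq\lambda_3\geq 0$. The equation reads $\sigma_3 = \lambda_1\lambda_2\lambda_3 = f\,\sigma_1 = f(\lambda_1+\lambda_2+\lambda_3)$. For the first inequality, I would argue that $\lambda_3$ cannot be too large relative to the right-hand side: since $\lambda_1\geq\lambda_2\geq\lambda_3$, we have $\sigma_1 = \lambda_1+\lambda_2+\lambda_3 \leq 3\lambda_1$, hence $\lambda_1\lambda_2\lambda_3 = f\,\sigma_1 \leq 3f\lambda_1$, which gives $\lambda_2\lambda_3\leq 3f$; then $\lambda_3^2\leq\lambda_2\lambda_3\leq 3f$, so $\lambda_3\leq\sqrt{3f}$.

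For the second inequality I would instead bound $\sigma_1$ from the other side: $\sigma_1 = \lambda_1+\lambda_2+\lambda_3\geq\lambda_1$, so $f\lambda_1\leq f\sigma_1 = \lambda_1\lambda_2\lambda_3$, whence $\lambda_2\lambda_3\geq f$ (note $\lambda_1>0$, which follows since $f>0$ forces $\sigma_3>0$ and hence all $\lambda_i>0$). Combining with $\lambda_2\geq\lambda_3$ gives $\lambda_2^2\geq\lambda_2\lambda_3\geq f$, i.e. $\lambda_2\geq\sqrt{f}\geq\sqrt{f/3}$. In fact this yields the slightly stronger $\lambda_2\geq\sqrt f$, but $\sqrt{f/3}$ is all that is claimed.

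I do not anticipate a genuine obstacle here; the only point requiring a word of care is justifying that all eigenvalues are strictly positive so that the divisions and the ordering manipulations are legitimate — this is immediate from $u$ convex (so $\lambda_i\geq 0$) together with $\sigma_3 = f\sigma_1 > 0$, which rules out any $\lambda_i = 0$. Everything else is elementary algebra with the symmetric functions.
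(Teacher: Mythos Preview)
Your argument is correct and essentially identical to the paper's: the bound $\lambda_3\le\sqrt{3f}$ is derived the same way (via $\lambda_2\lambda_3\le 3f$), and for $\lambda_2$ the paper instead writes $\lambda_2=f\sigma_1/(\lambda_1\lambda_3)\ge f/\lambda_3\ge f/\sqrt{3f}=\sqrt{f/3}$, whereas your use of $\lambda_2\lambda_3\ge f$ and $\lambda_2\ge\lambda_3$ yields the slightly sharper $\lambda_2\ge\sqrt{f}$. Either way the lemma follows by the same elementary manipulation of $\sigma_3=f\sigma_1$.
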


\begin{proof}
We have
\begin{align*}
f=\frac{\sigma_3}{\sigma_1}=\frac{\lambda_1\lambda_2\lambda_3}{\sigma_1}\geq \frac{\lambda_2\lambda_3}{3}\geq \frac{\lambda_3^2}{3},
\end{align*}
i.e. $\lambda_3\leq \sqrt{3f}$.

It follows that
\begin{align*}
\lambda_2=\frac{f\sigma_1}{\lambda_1\lambda_3}\geq \frac{f}{\lambda_3}\geq \frac{f}{\sqrt{3f}}=\sqrt{\frac{f}{3}}.
\end{align*}

\end{proof}

\section{Jacobi inequality}
In this section, we will derive a Jacobi inequality for $\ln \lambda_1$, where $\lambda_1$ is the largest eigenvalue of $D^2u$. This is the crucial ingredient of our proof. 

Instead of equation (\ref{eq-orginal}), we will consider the following slightly more general equation
\begin{align}\label{eq}
F(D^2u)=\left(\frac{\sigma_3}{\sigma_1}\right)(D^2u)=f(x,u),\quad \textit{in}\quad B_{10}\subset\mathbb{R}^3.
\end{align}

\begin{lemm}\label{Jacobi}
Let $f\in C^{1,1}(B_{10}\times \mathbb{R})$ be a positive function and let $u\in C^4(B_{10})$ be a convex solution of (\ref{eq}). For any $x_0\in B_9$, suppose that $D^2u$ is diagonalized at $x_0$ such that $\lambda_1\geq \lambda_2\geq \lambda_3$ and $\lambda_1\geq \Lambda(f)$, where $\Lambda(f)$ is a large constant depending only on $\max_{\overline{B}_9\times [-M, M]}f$. Set $b=\ln\lambda_1$. Then at $x_0$, we have
\begin{align*}
\sum_i F^{ii}b_{ii}\geq \frac{1}{432}\sum_i F^{ii}b_i^2-C,
\end{align*}
in the viscosity sense, where $C$ depends only on $\|u\|_{C^{0,1}(\overline{B}_9)}$, $\min_{\overline{B}_9\times [-M,M]}f $ and $\|f\|_{C^{1,1}\left( \overline{B}_9\times [-M,M]\right)}$. Here $M$ is a large constant satisfying $\|u\|_{L^\infty(\overline{B}_9)}\leq M$.
\end{lemm}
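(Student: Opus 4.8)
The plan is to compute $\sum_i F^{ii}b_{ii}$ directly by differentiating the equation twice in the direction of the largest eigenvalue, and then to extract enough good third-order terms to dominate the one bad term. Since $\lambda_1$ need not be smooth when the top eigenvalue has multiplicity, I would first reduce to the smooth case by the standard device of perturbing with a linear function so that at $x_0$ the eigenvalues $\lambda_1>\lambda_2\geq\lambda_3$ are distinct (or, if they are not, replace $\lambda_1$ by the smooth function $\tilde\lambda_1$ obtained from a fixed vector, which gives an upper barrier and hence suffices for the viscosity statement). Assuming $D^2u$ is diagonalized at $x_0$ with distinct top eigenvalue, the first derivatives of $\lambda_1$ are $\lambda_{1,i}=u_{11i}$ and the second derivatives are
\begin{align*}
\lambda_{1,ii}=u_{11ii}+\sum_{p>1}\frac{2u_{1pi}^2}{\lambda_1-\lambda_p}.
\end{align*}
Writing $b=\ln\lambda_1$, so $b_i=u_{11i}/\lambda_1$ and $b_{ii}=\lambda_{1,ii}/\lambda_1-b_i^2$, I then need to bound $\sum_i F^{ii}b_{ii}$ from below.

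The next step is to bring in the equation. Differentiating $F(D^2u)=f(x,u)$ once gives $\sum_{p,q}F^{pq}u_{pq1}=f_1:=\partial_{x_1}f+f_u u_1$, and differentiating again in the $x_1$-direction,
\begin{align*}
\sum_{p,q}F^{pq}u_{pq11}+\sum_{p,q,r,s}F^{pq,rs}u_{pq1}u_{rs1}=\partial_1 f_1,
\end{align*}
which is bounded below by a controlled constant times $-(1+|D^3u\text{-contracted terms}|)$; more precisely $\partial_1 f_1$ is linear in $u_{111}$ and otherwise bounded, so it contributes at most $C+\varepsilon F^{11}b_1^2$ after Cauchy–Schwarz against the $F^{11}u_{111}^2$ that will appear on the good side. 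Substituting the Hessian formula for $\lambda_{1,ii}$ and using $\sum_{p,q}F^{pq}u_{pq11}=\sum_i F^{ii}u_{ii11}$ (diagonal at $x_0$), I get
\begin{align*}
\sum_i F^{ii}\lambda_{1,ii}=-\sum_{p,q,r,s}F^{pq,rs}u_{pq1}u_{rs1}+\partial_1 f_1+2\sum_i\sum_{p>1}\frac{F^{ii}u_{1pi}^2}{\lambda_1-\lambda_p}.
\end{align*}
Dividing by $\lambda_1$ and subtracting $\sum_i F^{ii}b_i^2$, the whole problem reduces to showing that
\begin{align*}
-\frac{1}{\lambda_1}\sum_{p,q,r,s}F^{pq,rs}u_{pq1}u_{rs1}+\frac{2}{\lambda_1}\sum_i\sum_{p>1}\frac{F^{ii}u_{1pi}^2}{\lambda_1-\lambda_p}-\sum_i F^{ii}b_i^2\ \geq\ \frac{1}{432}\sum_i F^{ii}b_i^2-C,
\end{align*}
i.e. that the first two (a priori sign-indefinite, but mostly positive) third-order quantities beat $(1+\tfrac1{432})F^{ii}b_i^2$, the dominant piece of which is $(1+\tfrac1{432})F^{11}u_{111}^2/\lambda_1^2$.

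Here is the heart of the matter, and the step I expect to be the main obstacle: the term $-\tfrac1{\lambda_1}F^{11,11}u_{111}^2$ is \emph{negative} because, from Lemma~\ref{F^{ijkl}}, $F^{11,11}=-2\sigma_3^{11}/\sigma_1^2+2\sigma_3/\sigma_1^3$, and for $\lambda_1$ large one has $\sigma_3^{11}=\lambda_2\lambda_3>0$ while $\sigma_3/\sigma_1\to f$, so $F^{11,11}u_{111}^2/\lambda_1 \approx -2\lambda_2\lambda_3 u_{111}^2/(\sigma_1^2\lambda_1)$, and against it we must also absorb the genuinely bad $F^{11}b_1^2$. The off-diagonal Hessian terms $\tfrac1{\lambda_1}F^{ij,ji}u_{ij1}^2$ with $i=1$ equal $\tfrac1{\lambda_1}\tfrac{F^{11}-F^{pp}}{\lambda_1-\lambda_p}u_{11p}^2$ (using the identity in Lemma~\ref{F^{ijkl}}); combined with the positive gradient terms $\tfrac{2}{\lambda_1}\tfrac{F^{ii}u_{11i}^2}{\lambda_1-\lambda_i}$ these control all the $u_{11p}^2$ with $p\ne 1$. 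What remains is to handle $u_{111}^2$, and the claim (the ``key observation'' the introduction refers to) is that the \emph{positive} diagonal terms $-\tfrac1{\lambda_1}F^{ii,ii}u_{ii1}^2$ for $i=2,3$ — which are positive since $F^{22,22},F^{33,33}<0$ when $\lambda_2,\lambda_3$ are the small eigenvalues, wait, one must check signs carefully — together with the $i\ne j\in\{2,3\}$ mixed second-derivative terms $\tfrac1{\lambda_1}(2F^{22,33}u_{221}u_{331}+\cdots)$ and the differentiated equation relation $F^{11}u_{111}=f_1-F^{22}u_{221}-F^{33}u_{331}$ (which lets one trade $u_{111}$ for $u_{221},u_{331}$), produce a quadratic form in $(u_{111},u_{221},u_{331})$ that is bounded below by $\tfrac1{432}F^{11}u_{111}^2/\lambda_1^2 - C$. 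Establishing this pointwise algebraic inequality — showing the relevant $3\times3$ (or, after eliminating $u_{111}$, $2\times2$) quadratic form is positive semidefinite up to the explicit constant, uniformly in the eigenvalue configuration subject to $\lambda_1\geq\Lambda(f)$, $\lambda_2\geq\sqrt{f/3}$, $\lambda_3\leq\sqrt{3f}$ from Lemma~\ref{lambda_3} — is where all the delicacy lies and where the specific structure of $\sigma_3/\sigma_1$ (as opposed to a general Hessian quotient) gets used; the constant $1/432$ presumably emerges from optimizing this form. The remaining error terms (from $\partial_1 f_1$, from $f_u u_1$, from the lower-order pieces of $F^{ii,kk}$, and from the reduction to the smooth case) are all bounded by the stated $C$ via Cauchy–Schwarz and Lemma~\ref{lambda_3}, absorbing any stray $\varepsilon F^{11}b_1^2$ into the $1/432$ slack. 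I would organize the write-up as: (1) reduction to distinct eigenvalues / viscosity formulation; (2) the two differentiated-equation identities and the formula for $\sum_i F^{ii}b_{ii}$; (3) disposal of the $p\ne1$ third-order terms using the $F^{ij,ji}$ identity and the positive gradient terms; (4) the core quadratic-form estimate in $(u_{111},u_{221},u_{331})$; (5) collecting error terms.
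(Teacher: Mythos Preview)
Your framework matches the paper's: differentiate twice, substitute the second-derivative formula for $\lambda_1$, and reduce to a quadratic-form inequality in $(u_{111},u_{221},u_{331})$. (The paper handles multiplicity two by a separate direct computation using the viscosity inequality from \cite{BCD} rather than by perturbation; either device is standard.) Two minor slips: $-\tfrac{1}{\lambda_1}F^{11,11}u_{111}^2$ is \emph{positive}, since $F^{ii,ii}=-\tfrac{2}{\sigma_1}F^{ii}<0$ for every $i$; and $\partial_1 f_1$ is linear in $u_{11}=\lambda_1$ (through $f_u u_{11}$), not in $u_{111}$, so after dividing by $\lambda_1$ it simply contributes $-C$ with no Cauchy--Schwarz needed.

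The genuine gap is that your step (4) is the entire content of the lemma, and the paper's execution uses two mechanisms you have not anticipated. First, the structural identity special to $\sigma_3/\sigma_1$ in $n=3$,
\[
F^{ii,jj}=\frac{F^{ii}F^{jj}}{f}+\frac{f}{\sigma_1^2}\qquad(i\neq j),
\]
which, combined with $\sum_i F^{ii}u_{ii1}=f_1$, converts $-\sum_{i\neq j}F^{ii,jj}u_{ii1}u_{jj1}$ into $-\tfrac{f_1^2}{f}+\tfrac{1}{f}\sum_i(F^{ii})^2u_{ii1}^2-\tfrac{f}{\sigma_1^2}\sum_{i\neq j}u_{ii1}u_{jj1}$. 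The $(F^{11})^2u_{111}^2/f$ piece, together with $-F^{11,11}u_{111}^2$ and the already-subtracted $-F^{11}b_1^2$, nets a \emph{positive} coefficient $\tfrac{F^{11}}{\lambda_1\sigma_1}$ on $u_{111}^2$; the only remaining enemies are the small cross terms $\tfrac{f}{\sigma_1^2}u_{ii1}u_{jj1}$. Second, in your step (3) you extract from the gradient sum $\tfrac{2}{\lambda_1}\sum_{i}\sum_{p>1}\frac{F^{ii}u_{1pi}^2}{\lambda_1-\lambda_p}$ only the $u_{11p}^2$ pieces; the diagonal pieces $i=p\in\{2,3\}$ give additional positive terms $\tfrac{2F^{ii}u_{ii1}^2}{\lambda_1(\lambda_1-\lambda_i)}$, and it is these (together with the $(F^{ii})^2u_{ii1}^2/f$ from the identity) that absorb the cross terms by completion of squares. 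What then remains is a scalar inequality in $(\lambda_1,\lambda_2,\lambda_3)$, which the paper verifies by splitting into the regimes $\lambda_2\le\lambda_1/4$ and $\lambda_2\ge\lambda_1/4$; the constant $1/432$ emerges from the second regime via $\sigma_1\le 3\lambda_1$. Your proposed alternative of eliminating $u_{111}$ via the linear relation is not obviously easier: since $1/F^{11}\sim\lambda_1\sigma_1/\bigl(f(\lambda_2+\lambda_3)\bigr)$ blows up, the resulting $2\times2$ form has large coefficients whose cancellation would require the same eigenvalue estimates.
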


We will prove Lemma \ref{Jacobi} depending on the multiplicity of $\lambda_1$. Note that by Lemma \ref{lambda_3}, $\lambda_3\leq \sqrt{3f}$. Therefore, as long as we choose $\Lambda(f)>\sqrt{3f}$, $\lambda_1$ will not have multiplicity $3$. In the following two subsections, we will prove Lemma \ref{Jacobi} for cases that $\lambda_1$ has multiplicity $1$ and $2$ respectively.

\subsection{$\lambda_1$ has multiplicity $1$}

In this subsection, we will prove Lemma \ref{Jacobi} for the case that $\lambda_1$ has multiplicity $1$, i.e. $\lambda_1>\lambda_2\geq \lambda_3$.

\begin{proof}
Differentiating $\lambda_1$, we have
\begin{align*}
{\lambda_1}_i=u_{11i},\quad {\lambda_1}_{ii}= u_{11ii}+2\sum_{p\neq 1}\frac{u_{1pi}^2}{\lambda_1 -\lambda_p}.
\end{align*}

It follows that
\begin{align*}
b_{ii}=\frac{{\lambda_1}_{ii}}{\lambda_1}-\frac{{\lambda_1}_i^2}{\lambda_1^2}= \frac{u_{11ii}}{ \lambda_1}+2\sum_{p\neq 1}\frac{u_{1pi}^2}{\lambda_1 (\lambda_1 -\lambda_p )}-\frac{ u_{11i}^2}{\lambda_{1}^2}.
\end{align*}

Contracting with $F^{ii}$, we have
\begin{align}\label{m-0}
\sum_i F^{ii}b_{ii}\geq \sum_i \frac{F^{ii}u_{11ii}}{ \lambda_1}+2\sum_i\sum_{p\neq 1}\frac{F^{ii}u_{1pi}^2}{\lambda_1 (\lambda_1 -\lambda_p )}-\sum_i \frac{F^{ii} u_{11i}^2}{\lambda_{1}^2}.
\end{align}

Differentiating (\ref{eq}), we have
\begin{align}\label{m-0-1}
\sum_i F^{ii}u_{ii11}+\sum_{p,q,r,s} F^{pq,rs}u_{pq1}u_{rs1}=f_{11}\geq -C\lambda_1-C,
\end{align}
where $C$ depends only on $\|u\|_{C^{0,1}(\overline{B}_9)}$ and $\|f\|_{C^{1,1}\left( \overline{B}_9\times [-M,M]\right)}$. Here $M$ is a large constant satisfying $\|u\|_{L^\infty(\overline{B}_9)}\leq M$. 

In the following, we will denote $C$ to be a constant depending only on $\|u\|_{C^{0,1}(\overline{B}_9)}$, $\min_{\overline{B}_9\times [-M,M]}f $ and $\|f\|_{C^{1,1}\left( \overline{B}_9\times [-M,M]\right)}$. It may change from line to line. 

Plugging (\ref{m-0-1}) into (\ref{m-0}), we have
\begin{align}\label{m-1}
\sum_i F^{ii}b_{ii}\geq -\sum_{p,q,r,s} \frac{F^{pq,rs}u_{pq1}u_{rs1}}{\lambda_1 }+2\sum_i\sum_{p\neq 1}\frac{F^{ii}u_{1pi}^2}{\lambda_1 (\lambda_1 -\lambda_p )}-\sum_i \frac{ F^{ii}u_{11i}^2}{\lambda_{1}^2}-C.
\end{align}

By Lemma \ref{F^{ijkl}}, we have
\begin{align}\label{m-0-2}
-\sum_{p,q,r,s} F^{pq,rs}u_{pq1}u_{rs1}=-\sum_{i,j}F^{ii,jj}u_{ii1}u_{jj1}-\sum_{i\neq j}F^{ij,ji}u_{ij1}^2.
\end{align}

Plugging into (\ref{m-1}), we have
\begin{align}\label{m-2}
\sum_i F^{ii}b_{ii}\geq &\ -\sum_{i, j} \frac{F^{ii,jj}u_{ii1}u_{jj1}}{\lambda_1} -\sum_{i\neq j} \frac{F^{ij,ji}u_{ij1}^2}{\lambda_1}+2\sum_i\sum_{p\neq 1}\frac{F^{ii}u_{1pi}^2}{\lambda_1 (\lambda_1 -\lambda_p )}\\\nonumber
&\ -\sum_i \frac{ F^{ii}u_{11i}^2}{\lambda_{1}^2}-C.
\end{align}

Since $\lambda_1>\lambda_2\geq \lambda_3$, by Lemma \ref{F^{ijkl}}, we have
\begin{align*}
-\sum_{i\neq j} \frac{F^{ij,ji}u_{ij1}^2}{\lambda_1}=\sum_{i\neq j} \frac{\sigma_3^{ii,jj} u_{ij1}^2}{\lambda_1\sigma_1}\geq 2\sum_{i\neq 1} \frac{\sigma_3^{11,ii} u_{11i}^2}{\lambda_1\sigma_1}=2\sum_{i\neq 1} \frac{(F^{ii}-F^{11})u_{11i}^2}{\lambda_1(\lambda_1-\lambda_i)}.
\end{align*}

On the other hand,
\begin{align*}
2\sum_i\sum_{p\neq 1}\frac{F^{ii}u_{1pi}^2}{\lambda_1 (\lambda_1 -\lambda_p )} \geq&\ 2\sum_{p\neq 1}\frac{F^{pp}u_{1pp}^2}{\lambda_1 (\lambda_1 -\lambda_p )}+ 2\sum_{p\neq 1}\frac{F^{11}u_{1p1}^2}{\lambda_1 (\lambda_1 -\lambda_p )}\\
=&\ 2\sum_{i\neq 1}\frac{F^{ii}u_{ii1}^2}{\lambda_1 (\lambda_1 -\lambda_i )}+2\sum_{i\neq 1}\frac{F^{11}u_{11i}^2}{\lambda_1 (\lambda_1 -\lambda_i )}.
\end{align*}

Combining the above two inequalities and plugging into (\ref{m-2}), we have
\begin{align}\label{m-3}
\sum_i F^{ii}b_{ii}\geq &\ -\sum_{i,j} \frac{F^{ii,jj} u_{ii1}u_{jj1}}{\lambda_1} +2\sum_{i\neq 1}\frac{F^{ii}u_{ii1}^2}{\lambda_1 (\lambda_1 -\lambda_i )}+2\sum_{i\neq 1}\frac{F^{ii}u_{11i}^2}{\lambda_1 (\lambda_1 -\lambda_i )}\\\nonumber
&\ -\sum_i \frac{ F^{ii}u_{11i}^2}{\lambda_{1}^2}-C\\\nonumber
\geq  &\ -\sum_{i, j} \frac{F^{ii,jj}u_{ii1}u_{jj1}}{\lambda_1} +2\sum_{i\neq 1}\frac{F^{ii}u_{ii1}^2}{\lambda_1 (\lambda_1 -\lambda_i )}+\sum_{i\neq 1}\frac{F^{ii}u_{11i}^2}{\lambda_1^2} \\\nonumber
&\ -\frac{ F^{11}u_{111}^2}{\lambda_{1}^2}-C.
\end{align}

By Lemma \ref{F^{ijkl}}, we have
\begin{align}\label{m-3-1}
F^{ii,ii}=-2\frac{\sigma_3^{ii}}{\sigma_1^2}+2\frac{\sigma_3}{\sigma_1^3}=-\frac{2}{\sigma_1}F^{ii}.
\end{align}

Plugging into (\ref{m-3}), we have
\begin{align}\label{m-4}
\sum_i F^{ii}b_{ii}\geq&\ -\sum_{i\neq j} \frac{F^{ii,jj}u_{ii1}u_{jj1}}{\lambda_1} +2\sum_{i\neq 1}\frac{F^{ii}u_{ii1}^2}{\lambda_1 (\lambda_1 -\lambda_i )}+\sum_{i\neq 1}\frac{F^{ii}u_{11i}^2}{\lambda_1^2}\\\nonumber
&\ -\frac{1}{\lambda_1}\left(\frac{1}{\lambda_1}-\frac{2}{\sigma_1}\right)F^{11}u_{111}^2-C.
\end{align}

By Lemma \ref{F^{ijkl}}, for $i\neq j$, we have
\begin{align}\label{m-4-1}
F^{ii,jj}=&\ \frac{\sigma_3^{ii,jj}}{\sigma_1}-\frac{\sigma_3^{ii}}{\sigma_1^2}-\frac{\sigma_3^{jj}}{\sigma_1^2}+2\frac{\sigma_3}{\sigma_1^3}\\\nonumber
=&\ \frac{\sigma_3}{\sigma_1}\left(\frac{1}{\lambda_i\lambda_j}-\frac{1}{\lambda_i\sigma_1}-\frac{1}{\lambda_j\sigma_1}+\frac{2}{\sigma_1^2}\right)\\\nonumber
=&\ f\bigg(\left(\frac{1}{\lambda_i}-\frac{1}{\sigma_1}\right) \left(\frac{1}{\lambda_j}-\frac{1}{\sigma_1}\right)+\frac{1}{\sigma_1^2}\bigg)\\\nonumber
=&\ \frac{F^{ii}F^{jj}}{f}+\frac{f}{\sigma_1^2}.
\end{align}
We have used the fact that $F^{ii}=f\left(\frac{1}{\lambda_i}-\frac{1}{\sigma_1}\right)$ in the last line.

Together with the fact that $\sum_i F^{ii}u_{ii1}=f_1$ and $F^{11}=f\left(\frac{1}{\lambda_1}-\frac{1}{\sigma_1}\right)$, we have
\begin{align*}
&\  -\sum_{i\neq j}\frac{F^{ii,jj}u_{ii1}u_{jj1}}{\lambda_1}\\
=&\ -\frac{1}{f\lambda_1} \sum_{i\neq j}F^{ii}F^{jj}u_{ii1}u_{jj1}-\frac{f}{\lambda_1 \sigma_1^2} \sum_{i\neq j}u_{ii1}u_{jj1}\\
=&\ -\frac{1}{f\lambda_1}\left(\sum_iF^{ii}u_{ii1}\right)^2+\frac{1}{f\lambda_1}\sum_i (F^{ii})^2u_{ii1}^2-\frac{f}{\lambda_1\sigma_1^2} \sum_{i\neq j}u_{ii1}u_{jj1}\\
=&\ -\frac{f_1^2}{f\lambda_1}+\sum_i \frac{(F^{ii})^2u_{ii1}^2}{f\lambda_1}-\frac{f}{\lambda_1\sigma_1^2} \sum_{i\neq j}u_{ii1}u_{jj1}\\
\geq &\ \sum_{i\neq 1}\frac{(F^{ii})^2u_{ii1}^2}{f\lambda_1}+\frac{1}{\lambda_1}\left(\frac{1}{\lambda_1}-\frac{1}{\sigma_1}\right)F^{11}u_{111}^2-\frac{f}{\lambda_1\sigma_1^2} \sum_{i\neq j}u_{ii1}u_{jj1}-C.
\end{align*}

Plugging into (\ref{m-4}), we have
\begin{align}\label{m-5}
\sum_i F^{ii}b_{ii}\geq&\ -\frac{f}{\lambda_1\sigma_1^2} \sum_{i\neq j}u_{ii1}u_{jj1}+\sum_{i\neq 1} \frac{(F^{ii})^2u_{ii1}^2}{f\lambda_1}+2\sum_{i\neq 1}\frac{F^{ii}u_{ii1}^2}{\lambda_1 (\lambda_1 -\lambda_i )}\\\nonumber
&\ +\sum_{i\neq 1}\frac{F^{ii}u_{11i}^2}{\lambda_1^2}+\frac{F^{11}u_{111}^2}{\lambda_1\sigma_1}-C.
\end{align}

\medskip

{\bf Case A:} $\lambda_2\leq \frac{\lambda_1}{4}$.

\medskip

Note that for each $i\neq 1$, we have
\begin{align}\label{m-5-1}
&\ -\frac{2f}{\lambda_1\sigma_1^2}u_{111}u_{ii1}+\frac{(F^{ii})^2u_{ii1}^2}{f\lambda_1}\\\nonumber
=&\ \left(\frac{F^{ii}u_{ii1}}{\sqrt{f\lambda_1}}-\frac{f\sqrt{f}}{\sqrt{\lambda_1}\sigma_1^2}\frac{u_{111}}{F^{ii}}\right)^2- \frac{f^3}{\lambda_1\sigma_1^4} \frac{u_{111}^2}{(F^{ii})^2}\\\nonumber
\geq&\ - \frac{f^3}{\lambda_1\sigma_1^4} \frac{u_{111}^2}{(F^{ii})^2}.
\end{align}

Consequently,
\begin{align*}
-\frac{f}{\lambda_1\sigma_1^2} \sum_{i\neq j}u_{ii1}u_{jj1}+\sum_{i\neq 1} \frac{(F^{ii})^2u_{ii1}^2}{f\lambda_1}\geq -\frac{2f}{\lambda_1\sigma_1^2} u_{221}u_{331}-\sum_{i\neq 1} \frac{f^3}{\lambda_1\sigma_1^4} \frac{u_{111}^2}{(F^{ii})^2}.
\end{align*}

Plugging into (\ref{m-5}), we have 
\begin{align}\label{m-6}
\sum_i F^{ii}b_{ii}\geq&\ -\frac{2f}{\lambda_1\sigma_1^2} u_{221}u_{331}-\sum_{i\neq 1} \frac{f^3}{\lambda_1\sigma_1^4} \frac{u_{111}^2}{(F^{ii})^2}+2\sum_{i\neq 1}\frac{F^{ii}u_{ii1}^2}{\lambda_1 (\lambda_1 -\lambda_i )}\\\nonumber
&\ +\sum_{i\neq 1}\frac{F^{ii}u_{11i}^2}{\lambda_1^2}+\frac{F^{11}u_{111}^2}{\lambda_1\sigma_1}-C.
\end{align}

Let us compute the coefficient of $u_{111}^2$. Using the fact that $F^{33}\geq F^{22}$, $F^{ii}=f\left(\frac{1}{\lambda_i}-\frac{1}{\sigma_1}\right)$ and $\lambda_2\leq \frac{\lambda_1}{4}$, we have
\begin{align*}
&\ -\sum_{i\neq 1} \frac{f^3}{\lambda_1\sigma_1^4(F^{ii})^2 } +\frac{F^{11}}{\lambda_1\sigma_1}\geq -\frac{2f^3}{\lambda_1\sigma_1^4 (F^{22})^2 } +\frac{F^{11}}{\lambda_1\sigma_1}\\
=&\ \frac{f^3}{\lambda_1\sigma_1^4(F^{22})^2}\left(\sigma_1^3\left(\frac{1}{\lambda_1}-\frac{1}{\sigma_1}\right)\left(\frac{1}{\lambda_2}-\frac{1}{\sigma_1}\right)^2-2\right)\\
=&\ \frac{f^3}{\lambda_1\sigma_1^4(F^{22})^2} \frac{(\lambda_2+\lambda_3)(\lambda_1+\lambda_3)^2-2\lambda_1\lambda_2^2}{\lambda_1\lambda_2^2} \\
\geq &\ \frac{f^3}{\lambda_1\sigma_1^4(F^{22})^2} \frac{\lambda_1^2\lambda_2-2\lambda_1\lambda_2^2}{\lambda_1\lambda_2^2} \\
\geq &\ \frac{f^3}{\lambda_1\sigma_1^4(F^{22})^2}  \frac{\lambda_1}{2\lambda_2}=\frac{f^3}{2\lambda_2\sigma_1^4(F^{22})^2}.
\end{align*}

Using the fact that $F^{22}=f\left(\frac{1}{\lambda_2}-\frac{1}{\sigma_1}\right)\leq \frac{f}{\lambda_2}$ and $F^{11}=f\left(\frac{1}{\lambda_1}-\frac{1}{\sigma_1}\right)=\frac{f(\lambda_2+\lambda_3)}{\lambda_1\sigma_1}$, we have
\begin{align*}
\frac{f^3}{2\lambda_2\sigma_1^4(F^{22})^2} \geq \frac{f\lambda_2}{2\sigma_1^4}\geq  \frac{f(\lambda_2+\lambda_3)}{4\sigma_1^4}= \frac{\lambda_1F^{11}}{4\sigma_1^3} \geq \frac{1}{108}\frac{F^{11}}{\lambda_1^2}.
\end{align*}

Combining the above two inequalities and plugging into (\ref{m-6}), we have
\begin{align}\label{m-7}
\sum_i F^{ii}b_{ii}\geq&\ -\frac{2f}{\lambda_1\sigma_1^2} u_{221}u_{331}+2\sum_{i\neq 1}\frac{F^{ii}u_{ii1}^2}{\lambda_1 (\lambda_1 -\lambda_i )}+\sum_{i\neq 1}\frac{F^{ii}u_{11i}^2}{\lambda_1^2}\\\nonumber
&\ +\frac{1}{108}\frac{F^{11}u_{111}^2}{\lambda_1^2}-C.
\end{align}

Note that 
\begin{align*}
&\ -\frac{2f}{\lambda_1\sigma_1^2} u_{221}u_{331}+2\sum_{i\neq 1}\frac{F^{ii}u_{ii1}^2}{\lambda_1 (\lambda_1 -\lambda_i )}\\
\geq &\ -\frac{2f}{\lambda_1\sigma_1^2} u_{221}u_{331}+\frac{2F^{22}u_{221}^2}{\lambda_1^2}+\frac{2F^{33}u_{331}^2}{\lambda_1^2}\\
=&\ \left( \frac{\sqrt{2F^{22} }u_{221} }{\lambda_1}-\frac{f}{\sqrt{2} \sigma_1^2}\frac{u_{331}}{\sqrt{F^{22}} }\right)^2-\frac{f^2}{2\sigma_1^4}\frac{u_{331}^2}{F^{22}}+\frac{2F^{33}u_{331}^2}{\lambda_1^2}\\
\geq &\ \left(\frac{2F^{33}}{\lambda_1^2}-\frac{f^2}{2\sigma_1^4F^{22}}\right) u_{331}^2.
\end{align*}

Plugging into (\ref{m-7}), we have
\begin{align}\label{m-8}
\sum_i F^{ii}b_{ii}\geq&\ \left(\frac{2F^{33}}{\lambda_1^2}-\frac{f^2}{2\sigma_1^4F^{22}}\right) u_{331}^2+\sum_{i\neq 1}\frac{F^{ii}u_{11i}^2}{\lambda_1^2}+\frac{1}{108}\frac{F^{11}u_{111}^2}{\lambda_1^2}-C.
\end{align}

Since $\sigma_1\geq \lambda_2+\lambda_3\geq  2\lambda_3$, we have
\begin{align}\label{m-8-1}
F^{33}=f\left(\frac{1}{\lambda_3}-\frac{1}{\sigma_1}\right)\geq \frac{f}{2\lambda_3}.
\end{align}

Together with the fact that $F^{22}=\frac{f(\lambda_1+\lambda_3)}{\lambda_2\sigma_1}$, $\lambda_3=\frac{f\sigma_1}{\lambda_!\lambda_2}$ and Lemma \ref{lambda_3}, we have
\begin{align*}
\frac{2F^{33}}{\lambda_1^2}-\frac{f^2}{2\sigma_1^4F^{22}}\geq&\ \frac{f}{\lambda_1^2\lambda_3}-\frac{f\lambda_2}{2\sigma_1^3(\lambda_1+\lambda_3)}\geq \frac{\lambda_2}{\lambda_1\sigma_1}-\frac{f\lambda_2}{2\lambda_1\sigma_1^3}=\frac{\lambda_2}{\lambda_1\sigma_1} \left(1-\frac{f}{2\sigma_1^2} \right)\geq 0.
\end{align*}
We have used the fact that $\sigma_1\geq \lambda_1+\lambda_2\geq 2\lambda_2\geq 2\sqrt{\frac{f}{3}}\geq \sqrt{\frac{f}{2}}$ in the last inequality.

Plugging into (\ref{m-8}), we conclude that 
\begin{align*}
\sum_i F^{ii}b_{ii}\geq \sum_{i\neq 1}\frac{F^{ii}u_{11i}^2}{\lambda_1^2}+\frac{1}{108}\frac{F^{11}u_{111}^2}{\lambda_1^2}-C\geq  \frac{1}{108} \sum_i F^{ii}b_i^2-C.
\end{align*}

This completes the proof for Case A.

\medskip

{\bf Case B:} $\lambda_2\geq \frac{\lambda_1}{4}$.

\medskip

In this case, we have
\begin{align*}
\frac{1}{\lambda_1-\lambda_2}\geq \frac{1}{3\lambda_2}\geq \frac{1}{3}\left(\frac{1}{\lambda_2}-\frac{1}{\sigma_1}\right)=\frac{1}{3}\frac{F^{22}}{f}.
\end{align*}

Plugging into (\ref{m-5}), we have
\begin{align}\label{m-9}
\sum_i F^{ii}b_{ii}\geq&\ -\frac{f}{\lambda_1\sigma_1^2} \sum_{i\neq j}u_{ii1}u_{jj1}+ \frac{4(F^{22})^2u_{221}^2}{3f\lambda_1}+\frac{F^{22}u_{221}^2}{\lambda_1(\lambda_1-\lambda_2)}+ \frac{(F^{33})^2u_{331}^2}{f\lambda_1}\\\nonumber
&\ +\frac{2F^{33}u_{331}^2}{\lambda_1(\lambda_1-\lambda_3)}+\sum_{i\neq 1}\frac{F^{ii}u_{11i}^2}{\lambda_1^2} +\frac{F^{11}u_{111}^2}{\lambda_1\sigma_1}-C.
\end{align}

Note that
\begin{align*}
&\ -\frac{2f}{\lambda_1\sigma_1^2}u_{111}u_{221}+\frac{4(F^{22})^2u_{221}^2}{3f\lambda_1}\\
=&\ \left(\frac{2F^{22}u_{221}}{\sqrt{3f\lambda_1}}-\frac{f\sqrt{3f}}{2\sqrt{\lambda_1}\sigma_1^2}\frac{u_{111}}{F^{22}}\right)^2- \frac{3f^3}{4\lambda_1\sigma_1^4} \frac{u_{111}^2}{(F^{22})^2}\\
\geq&\ - \frac{3f^3}{4\lambda_1\sigma_1^4} \frac{u_{111}^2}{(F^{22})^2},
\end{align*}

Together with (\ref{m-5-1}), we have
\begin{align*}
&\ -\frac{f}{\lambda_1\sigma_1^2} \sum_{i\neq j}u_{ii1}u_{jj1}+\frac{4(F^{22})^2u_{221}^2}{3f\lambda_1}+ \frac{(F^{33})^2u_{331}^2}{f\lambda_1}\\
\geq &\ -\frac{2f}{\lambda_1\sigma_1^2} u_{221}u_{331}- \frac{3f^3}{4\lambda_1\sigma_1^4} \frac{u_{111}^2}{(F^{22})^2}- \frac{f^3}{\lambda_1\sigma_1^4} \frac{u_{111}^2}{(F^{33})^2}.
\end{align*}

Plugging into (\ref{m-9}), we have
\begin{align}\label{m-10}
\sum_i F^{ii}b_{ii}\geq&\ -\frac{2f}{\lambda_1\sigma_1^2} u_{221}u_{331}- \frac{3f^3}{4\lambda_1\sigma_1^4} \frac{u_{111}^2}{(F^{22})^2}- \frac{f^3}{\lambda_1\sigma_1^4} \frac{u_{111}^2}{(F^{33})^2}+\frac{F^{22}u_{221}^2}{\lambda_1(\lambda_1-\lambda_2)}\\\nonumber
&\ +\frac{2F^{33}u_{331}^2}{\lambda_1(\lambda_1-\lambda_3)}+\sum_{i\neq 1}\frac{F^{ii}u_{11i}^2}{\lambda_1^2}+\frac{F^{11}u_{111}^2}{\lambda_1\sigma_1}-C.
\end{align}

Let us compute the coefficient of $u_{111}^2$. Using the fact that $F^{ii}=f\left(\frac{1}{\lambda_i}-\frac{1}{\sigma_1}\right)$, we have
\begin{align*}
&\ -\frac{3f^3}{4\lambda_1\sigma_1^4(F^{22})^2} -\frac{f^3}{\lambda_1\sigma_1^4(F^{33})^2} +\frac{F^{11}}{\lambda_1\sigma_1}\\
=&\ \frac{f^3}{\lambda_1\sigma_1^4(F^{22})^2}\left(\sigma_1^3\left(\frac{1}{\lambda_1}-\frac{1}{\sigma_1}\right)\left(\frac{1}{\lambda_2}-\frac{1}{\sigma_1}\right)^2-\frac{3}{4}\right) -\frac{f^3}{\lambda_1\sigma_1^4} \frac{1}{(F^{33})^2}\\
=&\ \frac{f^3}{\lambda_1\sigma_1^4(F^{22})^2} \frac{4(\lambda_2+\lambda_3)(\lambda_1+\lambda_3)^2-3\lambda_1\lambda_2^2}{4\lambda_1\lambda_2^2} -\frac{f^3}{\lambda_1\sigma_1^4} \frac{1}{(F^{33})^2}\\
\geq &\ \frac{f^3}{\lambda_1\sigma_1^4(F^{22})^2} \frac{4\lambda_1^2\lambda_2-3\lambda_1\lambda_2^2}{4\lambda_1\lambda_2^2} -\frac{f^3}{\lambda_1\sigma_1^4} \frac{1}{(F^{33})^2}\\
\geq &\ \frac{f^3}{\lambda_1\sigma_1^4(F^{22})^2} \frac{\lambda_1}{4\lambda_2}-\frac{f^3}{\lambda_1\sigma_1^4} \frac{1}{(F^{33})^2}.
\end{align*}

By (\ref{m-8-1}) and the fact that $F^{22}=f\left(\frac{1}{\lambda_2}-\frac{1}{\sigma_1}\right)\leq \frac{f}{\lambda_2}$ and $\lambda_3=\frac{f\sigma_1}{\lambda_1\lambda_2}$, we have
\begin{align*}
&\ \frac{f^3}{\lambda_1\sigma_1^4(F^{22})^2} \frac{\lambda_1}{4\lambda_2}-\frac{f^3}{\lambda_1\sigma_1^4} \frac{1}{(F^{33})^2}\\
=&\ \frac{f^3}{\lambda_1\sigma_1^4} \left(\frac{\lambda_1}{4\lambda_2(F^{22})^2}-\frac{1}{(F^{33})^2}\right)\\
\geq &\ \frac{f^3}{\lambda_1\sigma_1^4} \left(\frac{\lambda_1\lambda_2}{4f^2}-\frac{4\lambda_3^2}{f^2}\right)\\
=&\ \frac{f}{\lambda_1\sigma_1^4} \left(\frac{\lambda_1\lambda_2}{4}-\frac{4f^2\sigma_1^2}{\lambda_1^2\lambda_2^2}\right)\\
\geq &\ \frac{f}{\lambda_1\sigma_1^4} \left(\frac{\lambda_1\lambda_2}{4}-\frac{36f^2}{\lambda_2^2}\right)\\
\geq &\ \frac{f}{\lambda_1\sigma_1^4} \frac{\lambda_1\lambda_2}{8},
\end{align*}
by assuming $\lambda_1\lambda_2^3\geq 288 f^2$. Since $\lambda_2\geq \sqrt{\frac{f}{3}}$, it suffice to assume $\lambda_1\geq\Lambda(f)\geq  864\sqrt{3f}$.

Combining with the fact that $F^{11}=\frac{f(\lambda_2+\lambda_3)}{\lambda_1\sigma_1}$, we have
\begin{align*}
\frac{f}{\lambda_1\sigma_1^4} \frac{\lambda_1\lambda_2}{8}= &\ \frac{f\lambda_2}{8\sigma_1^4}\geq \frac{f(\lambda_2+\lambda_3)}{16\sigma_1^4}=\frac{\lambda_1 F^{11}}{16\sigma_1^3}\geq \frac{1}{432}\frac{F^{11}}{\lambda_1^2}.
\end{align*}

Combining the above three inequalities and plugging into (\ref{m-10}), we have
\begin{align}\label{m-11}
\sum_i F^{ii}b_{ii}\geq&\ -\frac{2f}{\lambda_1\sigma_1^2} u_{221}u_{331}+\frac{F^{22}u_{221}^2}{\lambda_1(\lambda_1-\lambda_2)}+\frac{2F^{33}u_{331}^2}{\lambda_1(\lambda_1-\lambda_3)}\\\nonumber
&\ +\sum_{i\neq 1}\frac{F^{ii}u_{11i}^2}{\lambda_1^2}+\frac{1}{432}\frac{F^{11}u_{111}^2}{\lambda_1^2}-C.
\end{align}

Note that 
\begin{align*}
&\ -\frac{2f}{\lambda_1\sigma_1^2} u_{221}u_{331}+\frac{F^{22}u_{221}^2}{\lambda_1(\lambda_1-\lambda_2)}+\frac{2F^{33}u_{331}^2}{\lambda_1(\lambda_1-\lambda_3)}\\
\geq &\ -\frac{2f}{\lambda_1\sigma_1^2} u_{221}u_{331}+\frac{F^{22}u_{221}^2}{\lambda_1^2}+\frac{2F^{33}u_{331}^2}{\lambda_1^2}\\
=&\ \left(\frac{\sqrt{F^{22}}u_{221}}{\lambda_1}-\frac{f}{\sigma_1^2}\frac{u_{331}}{\sqrt{F^{22}}}\right)^2 -\frac{f^2}{\sigma_1^4 }\frac{u_{331}^2}{F^{22}}+\frac{2F^{33}u_{331}^2}{\lambda_1^2}\\
\geq&\  \left( \frac{2F^{33}}{\lambda_1^2} -\frac{f^2}{\sigma_1^4 F^{22}}\right) u_{331}^2.
\end{align*}

Plugging into (\ref{m-11}), we have 
\begin{align}\label{m-12}
\sum_i F^{ii}b_{ii}\geq&\  \left( \frac{2F^{33}}{\lambda_1^2} -\frac{f^2}{\sigma_1^4 F^{22}}\right) u_{331}^2+\sum_{i\neq 1}\frac{F^{ii}u_{11i}^2}{\lambda_1^2}+\frac{1}{432}\frac{F^{11}u_{111}^2}{\lambda_1^2}-C.
\end{align}

By (\ref{m-8-1}) and the fact that $F^{22}=\frac{f(\lambda_1+\lambda_3)}{\lambda_2\sigma_1}$, $\lambda_3=\frac{f\sigma_1}{\lambda_1\lambda_2}$ and Lemma \ref{lambda_3}, we have
\begin{align*}
\frac{2F^{33}}{\lambda_1^2} -\frac{f^2}{\sigma_1^4 F^{22}}\geq \frac{f}{\lambda_1^2\lambda_3}-\frac{f\lambda_2}{\sigma_1^3(\lambda_1+\lambda_3)}\geq \frac{\lambda_2}{\lambda_1\sigma_1}-\frac{f\lambda_2}{\lambda_1\sigma_1^3}=\frac{\lambda_2}{\lambda_1\sigma_1}\left(1-\frac{f}{\sigma_1^2}\right)\geq 0.
\end{align*}
We have used the fact that $\sigma_1\geq \lambda_1+\lambda_2\geq 2\lambda_2\geq 2\sqrt{\frac{f}{3}}\geq \sqrt{f}$ in the last inequality.

Plugging into (\ref{m-12}), we conclude that 
\begin{align*}
\sum_i F^{ii}b_{ii}\geq&\ \sum_{i\neq 1}\frac{F^{ii}u_{11i}^2}{\lambda_1^2}+\frac{1}{432}\frac{F^{11}u_{111}^2}{\lambda_1^2}-C\geq \frac{1}{432}\sum_i F^{ii}b_i^2-C.
\end{align*}

This completes the proof for Case B. The proof for the case that $\lambda_1$ has multiplicity $1$ is now finished.

\end{proof}

\subsection{$\lambda_1$ has multiplicity $2$}

In this subsection, we will prove Lemma \ref{Jacobi} for the case that $\lambda_1$ has multiplicity $2$, i.e. $\lambda_1=\lambda_2>\lambda_3$. The proof is very similar to the case that $\lambda_1$ has multiplicity $1$. For the sake of completeness, we provide all details here.

\begin{proof}

By Lemma 5 in \cite{BCD}, at $x_0$, we have
\begin{align}\label{BCD}
\delta_{kl}\cdot{\lambda _1}_i=u_{kli},\quad 1\leq k,l\leq 2,
\end{align}
\begin{align*}
{\lambda_1}_{ii}\geq u_{11ii}+2\frac{u_{13i}^2}{\lambda_1 -\lambda_3},
\end{align*}
in the viscosity sense.

It follows that
\begin{align*}
b_{ii}=\frac{{\lambda_1}_{ii}}{\lambda_1}-\frac{{\lambda_1}_i^2}{\lambda_1^2}= \frac{u_{11ii}}{ \lambda_1}+2\frac{u_{13i}^2}{\lambda_1 (\lambda_1 -\lambda_3 )}-\frac{ u_{11i}^2}{\lambda_{1}^2}.
\end{align*}

Contracting with $F^{ii}$, we have
\begin{align}\label{m-1-1}
\sum_i F^{ii}b_{ii}\geq \sum_i \frac{F^{ii}u_{11ii}}{ \lambda_1}+\sum_i\frac{2F^{ii}u_{13i}^2}{\lambda_1 (\lambda_1 -\lambda_3 )}-\sum_i \frac{F^{ii} u_{11i}^2}{\lambda_{1}^2}.
\end{align}

Plugging (\ref{m-0-1}) into (\ref{m-1-1}), we have
\begin{align}\label{m-1-3}
\sum_i F^{ii}b_{ii}\geq &\  -\sum_{p,q,r,s} \frac{F^{pq,rs}u_{pq1}u_{rs1}}{\lambda_1 }+\sum_i\frac{2F^{ii}u_{13i}^2}{\lambda_1 (\lambda_1 -\lambda_3 )}-\sum_i \frac{ F^{ii}u_{11i}^2}{\lambda_{1}^2}-C,
\end{align}
where $C$ depends only on $\|u\|_{C^{0,1}(\overline{B}_9)}$ and $\|f\|_{C^{1,1}\left( \overline{B}_9\times [-M,M] \right) }$. Here $M$ is a large constant satisfying $\|u\|_{L^\infty(\overline{B}_9)}\leq M$. 

In the following, we will denote $C$ to be a constant depending only on $\|u\|_{C^{0,1}(\overline{B}_9)}$, $\min_{\overline{B}_9\times [-M,M] }f $ and $\|f\|_{C^{1,1}\left( \overline{B}_9\times [-M,M]\right) }$. It may change from line to line. 

Plugging (\ref{m-0-2}) into (\ref{m-1-3}), we have
\begin{align}\label{m-1-4}
\sum_i F^{ii}b_{ii}\geq &\ -\sum_{i, j} \frac{F^{ii,jj}u_{ii1}u_{jj1}}{\lambda_1} -\sum_{i\neq j} \frac{F^{ij,ji}u_{ij1}^2}{\lambda_1}+\sum_i\frac{2F^{ii}u_{13i}^2}{\lambda_1 (\lambda_1 -\lambda_3 )}\\\nonumber
&\ -\sum_i \frac{ F^{ii}u_{11i}^2}{\lambda_{1}^2}-C.
\end{align}

Since $\lambda_1=\lambda_2>\lambda_3$, by Lemma \ref{F^{ijkl}}, we have
\begin{align*}
-\sum_{i\neq j} \frac{F^{ij,ji}u_{ij1}^2}{\lambda_1}=\sum_{i\neq j}\frac{\sigma_3^{ii,jj}u_{ij1}^2}{\lambda_1\sigma_1} \geq 2\frac{\sigma_3^{11,33}u_{113}^2}{\lambda_1\sigma_1} = 2\frac{(F^{33}-F^{11})u_{113}^2}{\lambda_1(\lambda_1-\lambda_3)}.
\end{align*}

On the other hand,
\begin{align*}
\sum_i\frac{2F^{ii}u_{13i}^2}{\lambda_1 (\lambda_1 -\lambda_3 )}\geq \frac{2F^{11}u_{131}^2}{\lambda_1 (\lambda_1 -\lambda_3 )}.
\end{align*}

Combining the above two inequalities and plugging into (\ref{m-1-4}), we have
\begin{align*}
\sum_i F^{ii}b_{ii}\geq &\ -\sum_{i,j} \frac{F^{ii,jj} u_{ii1}u_{jj1}}{\lambda_1} +\frac{2F^{33}u_{113}^2}{\lambda_1 (\lambda_1 -\lambda_3 )}-\sum_i \frac{ F^{ii}u_{11i}^2}{\lambda_{1}^2}-C.
\end{align*}

By (\ref{BCD}), $u_{221}=u_{122}=\delta_{12}(\lambda_1)_2=0$, $u_{112}=u_{121}=\delta_{12}(\lambda_1)_1=0$. Consequently,
\begin{align*}
\sum_i F^{ii}b_{ii}\geq &\ -\frac{F^{11,11}u_{111}^2}{\lambda_1}- \frac{F^{33,33}u_{331}^2}{\lambda_1}- 2\frac{F^{11,33}u_{111}u_{331}}{\lambda_1} \\\nonumber
&\ +\frac{ F^{33}u_{113}^2}{\lambda_1^2} -\frac{ F^{11}u_{111}^2}{\lambda_{1}^2}-C.
\end{align*}

By (\ref{m-3-1}), we have $F^{ii,ii}=-\frac{2}{\sigma_1}F^{ii}$. Consequently,
\begin{align}\label{m-1-5}
\sum_i F^{ii}b_{ii}\geq&\ -2\frac{F^{11,33}u_{111}u_{331}}{\lambda_1}+\frac{ F^{33}u_{113}^2}{\lambda_1^2} -\frac{1}{\lambda_1}\left(\frac{1}{\lambda_1}-\frac{2}{\sigma_1}\right)F^{11}u_{111}^2-C.
\end{align}

By (\ref{m-4-1}), we have
\begin{align*}
F^{11,33}=\frac{F^{11}F^{33}}{f}+\frac{f}{\sigma_1^2}
\end{align*}

Together with the fact that $\sum_i F^{ii}u_{ii1}=F^{11}u_{111}+F^{33}u_{331}=f_1$ and $F^{11}=f\left(\frac{1}{\lambda_1}-\frac{1}{\sigma_1}\right)$, we have
\begin{align*}
&\ -2 \frac{F^{11,33}u_{111}u_{331}}{\lambda_1} \\
=&\ -\frac{2}{f\lambda_1} F^{11}F^{33}u_{111}u_{331}-\frac{2f}{\lambda_1\sigma_1^2}u_{111}u_{331}\\
=&\ -\frac{ \left( F^{11}u_{111}+F^{33}u_{331}\right)^2}{f\lambda_1}+\frac{ (F^{11})^2u_{111}^2}{f\lambda_1}+\frac{(F^{33})^2u_{331}^2}{f\lambda_1}-\frac{2f}{\lambda_1\sigma_1^2}u_{111}u_{331}\\
=&\ -\frac{f_1^2}{f\lambda_1}+\frac{(F^{11})^2u_{111}^2}{f\lambda_1}+\frac{(F^{33})^2u_{331}^2}{f\lambda_1}-\frac{2f}{\lambda_1\sigma_1^2}u_{111}u_{331}\\
\geq &\ \frac{(F^{33})^2u_{331}^2}{f\lambda_1}+\frac{1}{\lambda_1}\left(\frac{1}{\lambda_1}-\frac{1}{\sigma_1}\right)F^{11}u_{111}^2-\frac{2f}{\lambda_1\sigma_1^2}u_{111}u_{331}-C.
\end{align*}

Plugging into (\ref{m-1-5}), we have
\begin{align*}
\sum_i F^{ii}b_{ii}\geq&\ -\frac{2f}{\lambda_1\sigma_1^2}u_{111}u_{331}+\frac{(F^{33})^2u_{331}^2}{f\lambda_1}+\frac{ F^{33}u_{113}^2}{\lambda_1^2}+\frac{F^{11}u_{111}^2}{\lambda_1\sigma_1}-C.
\end{align*}

Together with (\ref{m-5-1}), we have
\begin{align}\label{m-1-6}
\sum_i F^{ii}b_{ii}\geq&\ \left(\frac{F^{11}}{\lambda_1\sigma_1}-\frac{f^3}{\lambda_1\sigma_1^4(F^{33})^2}\right)u_{111}^2+\frac{ F^{33}u_{113}^2}{\lambda_1^2} -C.
\end{align}

By (\ref{m-8-1}) and the fact that $\lambda_3=\frac{f\sigma_1}{\lambda_1\lambda_2}=\frac{f\sigma_1}{\lambda_1^2}$, $F^{11}=\frac{f(\lambda_2+\lambda_3)}{\lambda_1\sigma_1}=\frac{f(\lambda_1+\lambda_3)}{\lambda_1\sigma_1}$,  we have
\begin{align*}
\frac{f^3}{\lambda_1\sigma_1^4(F^{33})^2} \leq \frac{4f\lambda_3^2}{\lambda_1\sigma_1^4}=\frac{4f^3}{\lambda_1^5\sigma_1^2}\leq \frac{f}{2\lambda_1\sigma_1^2} \leq \frac{f(\lambda_1+\lambda_3)}{2\lambda_1^2\sigma_1^2} =\frac{F^{11}}{2\lambda_1\sigma_1},
\end{align*}
by assuming $\lambda_1^4\geq 8f^2$. It suffice to assume $\lambda_1\geq \Lambda(f)\geq \sqrt[4]{8f^2}$.

Plugging into (\ref{m-1-6}), we conclude that 
\begin{align*}
\sum_i F^{ii}b_{ii}\geq&\ \frac{F^{11}u_{111}^2}{2\lambda_1\sigma_1}+\frac{ F^{33}u_{113}^2}{\lambda_1^2}-C\geq \frac{1}{6} \sum_i F^{ii}b_i^2-C.
\end{align*}

The proof for the case that $\lambda_1$ has multiplicity $2$ is now finished.
\end{proof}

\section{Legendre transform}

In this section, we adopt the idea by Shankar and Yuan \cite{SY-20} to use Legendre transform to obtain the bound of $b$ via its integral. 

For each $K>0$, define the Legendre transform for the function $u+\frac{K}{2}|x|^2$. We have
\begin{align*}
(x,Du(x)+Kx)=(Dw(y),y),
\end{align*}
where $w(y)$ is the Legendre transform for the function $u+\frac{K}{2}|x|^2$. Note that $y(x)=Du(x)+Kx$ is a diffeomorphism.

Define 
\begin{align*}
G(D^2w(y))=-F\left(-KI+\left(D^2w(y)\right)^{-1}\right)=-F(D^2u(x)).
\end{align*}

Suppose $D^2u$ is diagonalized at $x_0$. Then at $x_0$, we have
\begin{align}\label{g}
G^{ii}=F^{ii}\cdot w_{ii}^{-2}=F^{ii}(K+u_{ii})^2.
\end{align} 

\begin{lemm}\label{Jacobi-g}
Let $f\in C^{1,1}(B_{10}\times \mathbb{R})$ be a positive function and let $u\in C^4(B_{10})$ be a convex solution of (\ref{eq}). For any $x_0\in B_9$, suppose that $D^2u$ is diagonalized at $x_0$ such that $\lambda_1\geq \lambda_2\geq \lambda_3$ and $\lambda_1\geq \Lambda(f)$, where $\Lambda(f)$ is a large constant depending only on $\max_{\overline{B}_9\times [-M,M]}f$. Set $b=\ln\lambda_1$. Then at $y(x_0)$, we have
\begin{align*}
\sum_i G^{ii}b^*_{ii}\geq -C,
\end{align*}
in the viscosity sense, where $C$ depends only on $K$, $\|u\|_{C^{0,1}(\overline{B}_9)}$, $\min_{\overline{B}_9\times [-M,M]}f $ and $\|f\|_{C^{1,1}\left( \overline{B}_9\times [-M,M]\right) }$. Here $M$ is a large constant satisfying $\|u\|_{L^{\infty}(\overline{B}_9)}\leq M$. We use the notation $b^*(y)=b(y(x))$.
\end{lemm}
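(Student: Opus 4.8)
The plan is to transfer the Jacobi inequality from Lemma~\ref{Jacobi} (written in the $x$-variables with the operator $F$) to the Legendre-transformed picture (written in the $y$-variables with the operator $G$), and then to observe that the right-hand side, which was $\frac{1}{432}\sum_i F^{ii}b_i^2 - C$, becomes harmless after the change of variables because of the extra factor $(K+u_{ii})^2$ appearing in $G^{ii}$. First I would recall the basic relations between the two coordinate systems: since $y(x)=Du(x)+Kx$ is a diffeomorphism with $Dy=D^2u+KI=D^2u(x)+KI$ and $w_{ii}=(K+u_{ii})^{-1}$ at a point where $D^2u$ is diagonal, the chain rule gives, for any function $\phi$ with $\phi^*(y)=\phi(x(y))$,
\begin{align*}
\phi^*_i = \sum_k \phi_k \frac{\partial x_k}{\partial y_i} = \frac{\phi_i}{K+u_{ii}},
\end{align*}
at $x_0$ (using that $\partial x_k/\partial y_i = \delta_{ki} w_{ii} = \delta_{ki}(K+u_{ii})^{-1}$ in diagonal coordinates), and a second differentiation together with $G^{ii}=F^{ii}(K+u_{ii})^2$ from \eqref{g} will produce a clean formula relating $\sum_i G^{ii}b^*_{ii}$ to $\sum_i F^{ii}b_{ii}$ plus lower-order terms coming from the derivatives of the transition map.

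The key computation I expect is the following: contracting the second-order chain rule with $G^{ii}$ and using $G^{ii}w_{ii}^{-2}=F^{ii}$, one gets
\begin{align*}
\sum_i G^{ii}b^*_{ii} = \sum_i F^{ii}b_{ii} + \big(\text{terms involving } b_i \text{ and third derivatives of } u\big).
\end{align*}
The crucial point is that the "error" terms are, up to a constant depending on $K$ and $\|u\|_{C^{0,1}}$, bounded below by something like $-C\sum_i F^{ii}b_i^2 - C$; in fact the structure of the Legendre transform should make the dangerous contribution be exactly of the form $-\sum_i G^{ii}(\partial_i \log w_{ii})\, b^*_i$ type, which by Cauchy--Schwarz is controlled by $\varepsilon \sum_i G^{ii}(b^*_i)^2 + C_\varepsilon$, and in turn $\sum_i G^{ii}(b^*_i)^2 = \sum_i F^{ii}b_i^2$. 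So after inserting the Jacobi inequality $\sum_i F^{ii}b_{ii}\ge \frac{1}{432}\sum_i F^{ii}b_i^2 - C$ from Lemma~\ref{Jacobi}, I would absorb the error terms into the good $\frac{1}{432}\sum_i F^{ii}b_i^2$ term provided $\varepsilon$ is chosen small (which costs a constant $C$ depending on $K$), leaving $\sum_i G^{ii}b^*_{ii}\ge -C$.

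The point I expect to require the most care is the viscosity-sense bookkeeping and the precise form of the error terms. When $\lambda_1$ has multiplicity $2$, $b=\log\lambda_1$ is only semiconvex in the viscosity sense (via the test functions from \cite{BCD}), and one must check that the Legendre change of variables — which is a genuine $C^1$ diffeomorphism since $u\in C^4$ — commutes appropriately with taking viscosity test functions, i.e.\ that a lower test paraboloid for $b$ at $x_0$ pulls back to a lower test function for $b^*$ at $y(x_0)$ with the second derivatives transforming by the classical chain rule. This is essentially automatic because $y(x)$ is smooth, but it needs to be stated. A secondary technical point is that the inequality in Lemma~\ref{Jacobi} was proven pointwise in terms of $u_{ijk}$ at $x_0$, and I need the error terms generated by the change of variables to also be expressible in $u_{ijk}$ and $b_i=u_{11i}/\lambda_1$ at $x_0$ so that the absorption argument is legitimate; tracking which third derivatives appear (only those of the form $u_{iij}$ survive after contracting with the diagonal $G^{ii}$, and $b_i^2 = u_{11i}^2/\lambda_1^2$ controls $u_{11i}^2/\lambda_1^2$) is the main piece of routine-but-delicate algebra. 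Once the inequality $\sum_i G^{ii}b^*_{ii}\ge -C$ is obtained pointwise at $y(x_0)$ in the classical sense on the regular set and in the viscosity sense at multiplicity-two points, the proof is complete.
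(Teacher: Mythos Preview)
Your approach is essentially the paper's, but two points you leave vague are in fact where the argument lands cleanly, and one of them you miss entirely. First, the ``error terms'' from the change of variables are not of the speculative $\partial_i\log w_{ii}$ type you anticipate: if you carry out the chain rule (in either direction) and contract with $F^{ii}$, the cross term is exactly $\sum_{i,k}F^{ii}u_{iik}\,b^*_k=\sum_k f_k\,b^*_k$, because $\sum_i F^{ii}u_{iik}=f_k$ by differentiating the equation once. So no $\varepsilon$-absorption into the $\frac{1}{432}$ term is really needed beyond Cauchy--Schwarz on $\sum_k f_k b^*_k$. Second, after that Cauchy--Schwarz you are left with a term of size $C\sum_i (G^{ii})^{-1}$, and you never say why this is bounded; this is the step you are actually missing. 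The paper controls it by computing $G^{ii}=F^{ii}(K+\lambda_i)^2$ explicitly and using Lemma~\ref{lambda_3} (together with $\lambda_3=f\sigma_1/(\lambda_1\lambda_2)$) to show each $G^{ii}$ lies in $[C^{-1}\lambda_2,\,C\lambda_2]$, hence $\sum_i (G^{ii})^{-1}\le C/\lambda_2\le C$. Your concern about the viscosity bookkeeping at multiplicity-two points is legitimate but routine, since $y(x)$ is smooth; it is not where the difficulty lies.
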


\begin{proof}
By chain rule, we have
\begin{align}\label{chain}
\frac{\partial b}{\partial x_i}=\sum_k\frac{\partial y^k}{\partial x^i}\frac{\partial b^*}{\partial y^k}=\sum_k(K\delta_{ik}+u_{ik})b^*_k.
\end{align}

Consequently,
\begin{align*}
\frac{\partial^2 b}{\partial x_i^2}=&\ \sum_k u_{iik}b^*_k+\sum_{k,l}(K\delta_{ik}+u_{ik})b^*_{kl}(K\delta_{il}+u_{il})\\
=&\ \sum_k u_{iik}b^*_k+(K+u_{ii})^2b^*_{ii}.
\end{align*}

Together with (\ref{g}), we have
\begin{align*}
\sum_iF^{ii}b_{ii}=&\ \sum_{i,k}F^{ii}u_{iik}b^*_k+\sum_{i}F^{ii}(K+u_{ii})^2b^*_{ii}\\
=&\ \sum_kf_kb^*_k+\sum_iG^{ii}b^*_{ii}.
\end{align*}

Together with Lemma \ref{Jacobi}, we have
\begin{align*}
\sum_iG^{ii}b^*_{ii}=\sum_iF^{ii}b_{ii}-\sum_kf_kb^*_k\geq \frac{1}{432}\sum_iF^{ii}b_i^2-\sum_kf_kb^*_k-C.
\end{align*}

By (\ref{g}) and (\ref{chain}), we have
\begin{align*}
\sum_iF^{ii}b_i^2=\sum_i G^{ii}(K+u_{ii})^{-2}\bigg( (K+u_{ii})b_i^*\bigg)^2=\sum_iG^{ii}(b^*_i)^2.
\end{align*}

It follows that
\begin{align*}
\sum_iG^{ii}b^*_{ii}\geq&\ \frac{1}{432}\sum_iG^{ii}(b^*_i)^2-\sum_kf_kb^*_k-C\\
\geq&\  \frac{1}{432}\sum_iG^{ii}(b^*_i)^2-C\sum_k|b^*_k|-C\\
\geq&\ -C\sum_i \frac{1}{G^{ii}}-C.
\end{align*}

By (\ref{g}), Lemma \ref{F^{ijkl}}, Lemma \ref{lambda_3} and the fact that $\lambda_3=\frac{f\sigma_1}{\lambda_1\lambda_2}$, we have
\begin{align}\label{g-2}
G^{11}=&\ \frac{f(\lambda_2+\lambda_3)}{\lambda_1\sigma_1}(K+\lambda_1)^2\in \left(C^{-1}\lambda_2,C\lambda_2\right),\\\nonumber
G^{22}=&\ \frac{f(\lambda_1+\lambda_3)}{\lambda_2\sigma_1}(K+\lambda_2)^2\in \left(C^{-1}\lambda_2,C\lambda_2\right),\\\nonumber
G^{33}=&\ \frac{f(\lambda_1+\lambda_2)}{\lambda_3\sigma_1}(K+\lambda_3)^2= \frac{\lambda_1\lambda_2(\lambda_1+\lambda_2)}{\sigma_1^2}(K+\lambda_3)^2\in \left(C^{-1}\lambda_2,C\lambda_2\right).
\end{align}

Together with Lemma \ref{lambda_3}, we have
\begin{align*}
-\sum_i\frac{1}{G^{ii}}\geq -\frac{C}{\lambda_2}\geq -C.
\end{align*}

It follows that 
\begin{align*}
\sum_iG^{ii}b^*_{ii}\geq -C.
\end{align*}

The lemma is now proved.
\end{proof}

We now state the mean value inequality. 
\begin{lemm}\label{mean}
Let $f\in C^{1,1}(B_{10}\times \mathbb{R})$ be a positive function and let $u\in C^4(B_{10})$ be a convex solution of (\ref{eq}). Let
\begin{align*}
b=\ln \max\{\lambda_1,\Lambda(f)\},
\end{align*}
where $\lambda_1$ is the largest eigenvalue of $D^2u$ and $\Lambda(f)$ is from Lemma \ref{Jacobi-g}. Then we have
\begin{align*}
b(0)\leq  C\int_{B_1}b(x)\sigma_ 2(D^2u(x)) dx+C,
\end{align*}
where $C$ depends only on $\|u\|_{C^{0,1}(\overline{B}_9)}$, $\min_{\overline{B}_9\times [-M,M]}f $ and $\|f\|_{C^{1,1}\left( \overline{B}_9\times [-M,M]\right) }$. Here $M$ is a large constant satisfying $\|u\|_{L^\infty(\overline{B}_9)}\leq M$. 
\end{lemm}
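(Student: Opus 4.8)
The plan is to combine the differential inequality $\sum_i G^{ii}b^*_{ii}\geq -C$ from Lemma \ref{Jacobi-g} with the standard mean value property for subsolutions of uniformly elliptic operators, after transplanting everything to the Legendre domain. First I would note that the function $b^*$ defined on the image domain $\widetilde{\Omega}=y(B_9)$ satisfies, in the viscosity sense, $\sum_i G^{ij}b^*_{ij}\geq -C$ wherever $\lambda_1>\Lambda(f)$; and where $\lambda_1\leq\Lambda(f)$ the function $b^*$ is constant (equal to $\ln\Lambda(f)$), so the inequality holds there as well since we take the max. By \eqref{g-2}, the coefficients $G^{ii}$ are all comparable to $\lambda_2$, hence $G=(G^{ij})$ is uniformly elliptic \emph{after dividing by $\lambda_2$}; more precisely $\frac{1}{\lambda_2}G^{ij}$ has eigenvalues in a fixed compact subset of $(0,\infty)$. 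Dividing the inequality by $\lambda_2$ and using $\lambda_2\geq\sqrt{f/3}\geq c>0$ (Lemma \ref{lambda_3}) gives $\sum_i \widehat{G}^{ij}b^*_{ij}\geq -C$ with $\widehat{G}=\lambda_2^{-1}G$ uniformly elliptic with constants depending only on the allowed data.

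Next I would invoke the mean value / weak Harnack inequality for viscosity subsolutions of uniformly elliptic equations in non-divergence form (Krylov--Safonov theory): there is a constant $C$, depending only on the ellipticity constants and the dimension, such that for a function $v$ with $\sum_i a^{ij}v_{ij}\geq -C_0$ on a ball $B_{2r}(y_0)\subset\widetilde\Omega$,
\begin{align*}
v(y_0)\leq C\left(\fint_{B_r(y_0)} v^+\,dy + C_0 r^2\right),
\end{align*}
or the analogous $L^1$-mean-value bound $v(y_0)\leq C r^{-n}\int_{B_r(y_0)} v + Cr^2$ valid once $v\geq 0$ (which we may arrange since $b^*\geq \ln\Lambda(f)$, and subtracting this constant only changes $C$). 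Applying this at $y_0=y(0)=Du(0)+K\cdot 0=Du(0)$ on a fixed small ball contained in $y(B_9)$ — whose radius is controlled because $y$ is a diffeomorphism with $\|u\|_{C^{0,1}}$ and $K$ controlling the image — yields
\begin{align*}
b(0)=b^*(y(0))\leq C\int_{B_r(y(0))} b^*(y)\,dy + C.
\end{align*}

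The final step is to change variables back from $y$ to $x$. Since $y=Du(x)+Kx$, the Jacobian is $\det(D^2u+KI)=\prod_i(\lambda_i+K)$, so $dy=\prod_i(\lambda_i+K)\,dx$. Thus
\begin{align*}
\int_{B_r(y(0))} b^*(y)\,dy = \int_{y^{-1}(B_r(y(0)))} b(x)\prod_i(\lambda_i+K)\,dx \leq C\int_{B_1} b(x)\prod_i(\lambda_i+K)\,dx,
\end{align*}
using that $y^{-1}(B_r(y(0)))\subset B_1$ for $r$ small (again controlled by the diffeomorphism bound). It then remains to compare $\prod_i(\lambda_i+K)$ with $\sigma_2(D^2u)$: expanding, $\prod_i(\lambda_i+K)=\sigma_3+K\sigma_2+K^2\sigma_1+K^3$, and since $\sigma_3=f\sigma_1\leq C\sigma_1\leq C\sigma_2$ (as $\sigma_1^2\leq C\sigma_2$ when $\lambda_3\geq cf/\sigma_1$... more directly $\sigma_1\leq 3\lambda_1$ and $\sigma_2\geq \lambda_1\lambda_2\geq c\lambda_1$ by Lemma \ref{lambda_3}), and $\sigma_1\leq C\sigma_2$, $1\leq C\sigma_2$, we get $\prod_i(\lambda_i+K)\leq C\sigma_2(D^2u)$ with $C$ depending on $K$ and the data. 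Absorbing constants gives exactly $b(0)\leq C\int_{B_1} b\,\sigma_2(D^2u)\,dx + C$.

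The main obstacle I expect is making the domain bookkeeping clean: one must verify that a fixed-size ball around $y(0)$ in the Legendre domain pulls back into $B_1$ (equivalently $B_9$), which requires that $y(x)=Du(x)+Kx$ be a bi-Lipschitz map with constants depending only on $\|u\|_{C^{0,1}(\overline B_9)}$ and $K$ — the lower Lipschitz bound for $y^{-1}$ comes from convexity ($D^2u+KI\geq KI$), and the upper bound on the image size from the gradient bound $|Du|\leq\|u\|_{C^{0,1}}$ — and then choosing $r$ accordingly. A secondary technical point is the correct citation and form of the Krylov--Safonov mean value inequality for \emph{viscosity} subsolutions (since $b^*$ is only known to be a viscosity subsolution, due to the multiplicity-2 case), but this is standard. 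All other steps are routine once these are in place.
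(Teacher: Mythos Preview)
Your proposal is correct and follows essentially the same route as the paper: transplant the Jacobi inequality to the Legendre side via Lemma \ref{Jacobi-g}, rescale $G^{ij}$ by $\lambda_2$ to obtain a uniformly elliptic operator, apply the local maximum principle for viscosity subsolutions, and then change variables back using the Jacobian $\det(D^2u+KI)$, which is controlled by $\sigma_2$ through Lemma \ref{lambda_3}. The paper streamlines two of your bookkeeping steps---it normalizes $Du(0)=0$ so that $y(0)=0$ and the image ball sits at the origin, and it absorbs the inhomogeneous right-hand side by adding $A|y|^2$ rather than quoting the $-C_0$ form of the mean value inequality---but these are cosmetic differences, not substantive ones.
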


\begin{proof}
Let $K=1$ in Lemma \ref{Jacobi-g}. Without loss of generality, we may assume $Du(0)=0$, i.e. $y(0)=0$.  Note that the maximum of two subsolutions is still a subsolution. It follows that $b$ satisfies
\begin{align*}
\sum_i G^{ii}b^*_{ii}\geq -C,
\end{align*}
in the viscosity sense.

By Lemma \ref{lambda_3} and (\ref{g-2}), we have
\begin{align*}
\sum_i G^{ii}\geq G^{11}=\frac{f(\lambda_2+\lambda_3)}{\lambda_1\sigma_1}(1+\lambda_1)^2\geq \frac{f\lambda_1\lambda_2}{\sigma_1}\geq \frac{f}{3}\sqrt{\frac{f}{3}}.
\end{align*}

Consequently,
\begin{align*}
\sum_i G^{ii}\left( b^*(y)+A|y|^2\right)_{ii}\geq 0,
\end{align*}
where $A$ depends only on $\|u\|_{C^{0,1}(\overline{B}_9)}$, $\min_{\overline{B}_9\times [-M,M]}f $ and $\|f\|_{C^{1,1}\left( \overline{B}_9\times [-M,M]\right) }$.

By (\ref{g-2}), $\frac{G^{ii}}{\lambda_2} $ is uniformly elliptic. By local maximum principle (Theorem 4.8 in \cite{CC}), we have
\begin{align*}
b^*(0)+A|0|^2\leq C\int_{B_1^y}\left( b^*(y)+A|y|^2\right) dy.
\end{align*}

Since $D^2u>0$, $y(x)=Du(x)+x$ is uniformly monotone, i.e. $|y(x_I)-y(x_{II})|\geq |x_I-x_{II}|$, we have $x(B_1^y)\subset B_1$. Together with the fact that $y(0)=0$ and $\sigma_3=f\sigma_1$, we have
\begin{align*}
b(0)= b^*(0)\leq &\ C\int_{B_1}b(x)\det(D^2u(x)+I)dx+C\\
\leq &\ C\int_{B_1}b(x) (\lambda_1+1)(\lambda_2+1)(\lambda_3+1)) dx+C\\
\leq &\ C\int_{B_1}b\left(\sigma_1+\sigma_2+\sigma_3\right)  dx+C\\
\leq &\ C\int_{B_1}b\left(\sigma_1+\sigma_2\right)  dx+C.
\end{align*}

By Lemma \ref{lambda_3}, we have
\begin{align*}
\sigma_2\geq \lambda_1\lambda_2\geq \frac{\sigma_1}{3}\sqrt{\frac{f}{3}}.
\end{align*}

It follows that
\begin{align*}
b(0)\leq C\int_{B_1}b\sigma_2 dx+C.
\end{align*}

The lemma is now proved.
\end{proof}

\section{Proof of the main theorem}

In this section, we complete the proof of Theorem \ref{Theorem} via integration by parts. Unlike Hessian equations, the coefficient $F^{ij}$ of the linearized equation is not divergence free. We will choose $H^{ij}=\sigma_1F^{ij}$ to perform the integration by parts.

\begin{proof}
Let $b=\ln \max\{\lambda_1,\Lambda(f)\}$ as in Lemma \ref{mean}.

Let $\varphi$ be a cutoff function such that $\varphi=1$ on $B_1$ and $\varphi=0$ outside $B_2$. Using the fact that $\sum_i (\sigma_2^{ij})_i=0$, we have
\begin{align*}
\int_{B_1}b\sigma_ 2dx\leq&\ \int_{B_2}\varphi b \sigma_ 2dx=\frac{1}{2}\sum_{i,j}\int_{B_2}\varphi b \sigma_ 2^{ij}u_{ij}dx\\\nonumber
=&\ -\frac{1}{2}\sum_{i,j}\int_{B_2} (\varphi_i b+\varphi b_i) \sigma_2^{ij}u_j dx\\\nonumber
\leq &\ C\int_{B_2} b\sigma_1 dx-\frac{1}{2}\sum_{i,j}\int_{B_2}\varphi b_i\sigma_2^{ij}u_j dx,
\end{align*}
where $C$ depends only on $\|u\|_{C^{0,1}(\overline{B}_9)}$.

In the following, we will denote $C$ to be a constant depending only on $\|u\|_{C^{0,1}(\overline{B}_9)}$, $\min_{\overline{B}_9}f $ and $\|f\|_{C^{1,1}(\overline{B}_9)}$. It may change from line to line.

Together with Lemma \ref{mean}, we have
\begin{align}\label{p-1}
b(0)\leq  C\int_{B_2} b\sigma_1 dx-\frac{C}{2}\sum_{i,j}\int_{B_2}\varphi b_i\sigma_2^{ij}u_j dx+C.
\end{align}

For each $x_0\in B_2$, without loss of generality, we may assume $D^2u$ is diagonalized at $x_0$. By the fact that $F^{ii}=\frac{f(\sigma_1-\lambda_i)}{\lambda_i\sigma_1}$, we have
\begin{align*}
\sum_i b_i\sigma_2^{ii}=\sum_i b_i(\sigma_1-\lambda_i)=\frac{\sigma_1}{f}\sum_i b_i\lambda_i F^{ii}.
\end{align*}

Define
\begin{align*}
H^{ij}=\sigma_1F^{ij}=\sigma_3^{ij}-\frac{\sigma_3}{\sigma_1}\delta_{ij}=\sigma_3^{ij}-f\delta_{ij}.
\end{align*}

Then
\begin{align*}
\sum_{i,j}|\varphi b_i\sigma_2^{ij}u_j|=&\ \sum_i|\varphi b_i\sigma_2^{ii}u_i|\leq  C \sum_i |b_i|\lambda_i H^{ii} \\
\leq&\  C\sum_{i}H^{ii}b_i^2+C\sum_i H^{ii}\lambda_i^2\\
=&\ C\sum_{i}H^{ii}b_i^2+C\left(\sigma_3\sum_i\lambda_i-\frac{\sigma_3}{\sigma_1}\left(\sigma_1^2-2\sigma_2\right)\right)\\
\leq&\ C\sum_{i}H^{ii}b_i^2+C\sigma_2.
\end{align*}

Plugging into (\ref{p-1}), we have
\begin{align}\label{p-2}
b(0)\leq C\int_{B_2} b\sigma_1 dx+C\sum_{i}\int_{B_2}H^{ii}b_i^2dx+C\int_{B_2}\sigma_2dx+C.
\end{align}

Let $\phi$ be a cutoff function such that $\phi=1$ on $B_2$ and $\phi=0$ outside $B_3$. Then
\begin{align*}
\int_{B_2}\sigma_2dx\leq \int_{B_3}\phi \sigma_2dx=-\frac{1}{2}\sum_{i,j}\int_{B_3}\phi_i \sigma_2^{ij}u_jdx\leq C\int_{B_3}\sigma_1dx\leq C.
\end{align*}

On the other hand,
\begin{align*}
\int_{B_2} b\sigma_1 dx\leq&\ \int_{B_3}\phi b\sigma_1dx=-\sum_i\int_{B_3}(\phi_i b+\phi b_i)u_idx\\
\leq&\ C\int_{B_3}\sigma_1dx+C\int_{B_3}|Db|dx\\
\leq&\ C+C\int_{B_3}|Db|dx.
\end{align*}

Plugging the above two inequalities into (\ref{p-2}), we have
\begin{align}\label{p-3}
b(0)\leq C\int_{B_3}|Db|dx+C\sum_{i}\int_{B_2}H^{ii}b_i^2dx+C.
\end{align}

Now
\begin{align*}
|Db|\leq \sum_i|b_i|\leq \frac{1}{2}\sum_i H^{ii}b_i^2+\frac{1}{2}\sum_i \frac{1}{H^{ii}}.
\end{align*}

By Lemma \ref{lambda_3} and the fact that $H^{11}=\frac{f(\lambda_2+\lambda_3)}{\lambda_1}$, we have
\begin{align*}
\int_{B_3} \frac{1}{H^{11}}dx&=\int_{B_3} \frac{\lambda_1}{f(\lambda_2+\lambda_3)}dx\leq C\int_{B_3}\lambda_1dx\leq C\int_{B_3}\sigma_1dx\leq C.
\end{align*}

Thus
\begin{align*}
\int_{B_3} \frac{1}{H^{33}}dx\leq \int_{B_3} \frac{1}{H^{22}}dx\leq \int_{B_3} \frac{1}{H^{11}}dx\leq C.
\end{align*}

Combining the above three inequalities and plugging into (\ref{p-3}), we have
\begin{align}\label{p-4}
b(0)\leq C\sum_{i}\int_{B_3}H^{ii}b_i^2dx+C.
\end{align}

Since the maximum of two subsolutions is still a subsolution, it follow from Lemma \ref{Jacobi} that $b=\ln \max\{\lambda_1,\Lambda(f)\}$ satisfies
\begin{align*}
\sum_{i,j} H^{ij}b_{ij}\geq \frac{1}{432}\sum_{i,j} H^{ij}b_ib_j -C\sigma_1,
\end{align*}
in the viscosity sense.

By Theorem 1 in \cite{I}, $b$ also satisfies the above inequality in the distribution sense. Let $\Phi$ be a cutoff function such that $\Phi=1$ on $B_3$ and $\Phi=0$ outside $B_4$. Then
\begin{align*}
&\ \sum_{i,j}\int_{B_3}H^{ij}b_ib_jdx\\
\leq&\  \sum_{i,j} \int_{B_4}\Phi^2H^{ij}b_ib_jdx\leq 432\sum_{i,j} \int_{B_4}\Phi^2\left( H^{ij}b_{ij}+C\sigma_1\right) dx\\
\leq &\ -864 \sum_{i,j} \int_{B_4}\Phi\Phi_j H^{ij}b_i dx-C\sum_{i,j}\int_{B_4}\Phi^2 (H^{ij})_j b_idx+C\\
\leq &\ \frac{1}{2}\sum_{i}\int_{B_4} \Phi^2H^{ii}b_i^2dx+C\sum_i \int_{B_4}\Phi_i^2H^{ii}dx+C\sum_i\int_{B_4}\Phi^2 f_ib_idx+C.
\end{align*}
We have used the fact that $\sum_j (H^{ij})_j=\sum_j\left((\sigma_3^{ij})_j-(f\delta_{ij})_j\right)=-f_i$ in the last line.

Together with the fact that $H^{ii}=\sigma_3^{ii}-f$, we have
\begin{align*}
\frac{1}{2}\sum_{i,j}\int_{B_4} \Phi^2H^{ii}b_i^2dx\leq&\ C\sum_i \int_{B_4}\Phi_i^2H^{ii}dx+C\sum_i\int_{B_4}\Phi^2 f_ib_idx+C\\
\leq &\ C\sum_i \int_{B_4} H^{ii}dx-C\sum_i\int_{B_4}\left(2\Phi\Phi_if_i+\Phi^2 f_{ii}\right)bdx+C\\
\leq &\ C\int_{B_4}\left(\sigma_2-3f\right)dx+C\int_{B_4}\sigma_1dx+C\\
\leq &\ C\int_{B_4}\sigma_2dx+C.
\end{align*}

Consequently,
\begin{align*}
\sum_{i}\int_{B_3}H^{ii}b_i^2dx\leq \sum_i \int_{B_4}\Phi^2H^{ii}b_i^2dx\leq C\int_{B_4}\sigma_2dx+C.
\end{align*}

Plugging into (\ref{p-4}), we have
\begin{align*}
b(0)\leq C\int_{B_4}\sigma_2dx+C.
\end{align*}

Let $\Psi$ be a cutoff function such that $\Psi=1$ on $B_4$ and $\Psi=0$ outside $B_5$. Then
\begin{align*}
\int_{B_4}\sigma_2dx\leq \int_{B_5}\Psi \sigma_2dx=-\frac{1}{2}\sum_{i,j}\int_{B_5}\Psi_i \sigma_2^{ij}u_j dx\leq C\int_{B_5}\sigma_1dx\leq C.
\end{align*}

It follows that 
\begin{align*}
b(0)\leq C.
\end{align*}

The theorem is now proved.

\end{proof}

\end{document}